\documentclass[11pt]{article}

\usepackage[T1]{fontenc}
\usepackage{lmodern}
\usepackage[utf8]{inputenc}
\usepackage[english]{babel}

\usepackage{geometry}
\geometry{margin=1in}

\usepackage{xcolor}
\usepackage{microtype}
\usepackage{graphicx}

\usepackage{amsmath, amssymb, amsthm, mathtools, mathrsfs}
\numberwithin{equation}{section}

\renewenvironment{abstract}
  {%
   \begin{center}%
   \begin{minipage}{0.8\textwidth}%
   \small
   \noindent\textbf{ABSTRACT.} %
  }%
  {%
   \par
   \end{minipage}%
   \end{center}%
   \vspace{\baselineskip}%
  }

\usepackage{titlesec}

\titleformat{\section}
  {\normalfont\normalsize\centering}
  {\thesection.}
  {0.5em}
  {}

\titleformat{\subsection}[runin]
  {\normalfont\normalsize\bfseries}
  {\thesubsection}
  {0.5em}
  {}
  [.]

\titlespacing*{\subsection}
  {0pt}
  {1.5ex plus .2ex}
  {0.5em}

\theoremstyle{plain}
\newtheorem{theorem}{Theorem}[section]
\newtheorem{lemma}[theorem]{Lemma}

\newtheorem{corollary}[theorem]{Corollary}

\theoremstyle{definition}

\theoremstyle{remark}

\newcommand{\sumstar}{\sideset{}{^*}\sum}
\newcommand{\sumh}{\sideset{}{^h}\sum}
\newcommand{\sumd}{\sideset{}{^d}\sum}






\usepackage{hyperref}


\newcommand{\msubsection}[2]{\subsection[#1]{#1 \texorpdfstring{$#2$}{#2}}}

\let\originalleft\left
\let\originalright\right
\renewcommand{\left}{\mathopen{}\mathclose\bgroup\originalleft}
\renewcommand{\right}{\aftergroup\egroup\originalright}

\makeatletter
\renewcommand{\maketitle}{%
  \begin{center}
    {\Large \@title\par}%
    \vspace{0.75\baselineskip}%
    {\normalsize \@author\par}%
    \vspace{\baselineskip}%
  \end{center}%
}
\makeatother

\title{\textbf{ONE-LEVEL DENSITY OF ZEROS OF $\Gamma_1(q)$ $L$-FUNCTIONS}}
\author{ARIJIT PAUL}
\date{}

\begin{document}
\maketitle
\begin{abstract}
We study the one-level density of zeros for a family of $\Gamma_1(q)$ $L$-functions.
Assuming GRH, we are able to extend the support of the Fourier transform of the test function to
$\left(-\frac{8}{3},\frac{8}{3}\right)$ and verify the Katz–Sarnak prediction for our unitary family. As an application,  we obtain that the proportion
of forms in the family with non-vanishing at the central point is at least $62.5\%$, assuming GRH.
This is the highest non-vanishing proportion for any family associated with a unitary group. Moreover, this result indicates that the structural properties of $L$-functions play a more important role in extending the support than the associated symmetry group.

\end{abstract}

\medskip

\section{INTRODUCTION}
\subsection{Background and main result}
A central topic in analytic number theory is the statistical distribution of zeros of
$L$-functions. Katz and Sarnak \cite{KS} formulated the philosophy that zeros of
automorphic $L$-functions in natural families exhibit the same local statistics as eigenvalues of random matrices
from a classical compact group determined by the symmetry of the family. Inspired by their work, Iwaniec-Luo-Sarnak \cite{ILS} introduced one-level density to understand the distribution of zeros of a family of $L$-functions near the central point. In particular, for an even Schwartz function $\varphi$ with compactly supported Fourier transform, they defined the one-level density  for any natural family $\mathcal F$ of automorphic \(L\)-functions as
\[
D_{\mathcal F}(\varphi)\ :=\ \frac{1}{|\mathcal F|}\sum_{f\in\mathcal F}\ \sum_{\gamma_f}
\varphi\!\left(\frac{\gamma_f\log Q_f}{2\pi}\right),
\]
where $Q_f$ is the analytic conductor and $\rho_f=\tfrac12+i\gamma_f$ ranges over the nontrivial
zeros. Katz and Sarnak conjectured that for any natural family $\mathcal F$ of automorphic \(L\)-functions,
\begin{equation}
\lim_{Q\to\infty} D_{\mathcal F}(\varphi)\ =\ \int_{-\infty}^{\infty}\varphi(x)\,W(G)(x)\,dx,
\end{equation}
where $G\in\{U,O,Sp, SO(\text{even}), SO(\text{odd})\}$ is the symmetry group associated with $\mathcal F$ and $W(G)$ is the corresponding
one-level kernel. Several studies affirm the conjecture in particular contexts. However, in each scenario, we obtain restrictions on the support of the Fourier transform of the test function $\varphi$. This constraint arises due to limitations in our ability to
bound off–diagonal contributions.\\

An interesting problem in this area is to enlarge the admissible support of $\widehat{\varphi}$ beyond the range already available in the literature. Extending the support is important, as it gives more refined arithmetic information, such as stronger results on the proportion of non-vanishing central $L$-values and evidence toward the simplicity of zeros. Although there has been steady progress over the years, extending the support without extra conditions (except GRH) is extremely difficult. For instance, Iwaniec-Luo-Sarnak \cite{ILS} showed that, for a family of holomorphic cusp forms,
assuming GRH, one obtains a one-level density result, as predicted by Katz–Sarnak for  $\operatorname{supp} \widehat{\varphi}\subset(-2,2)$.
Under an additional hypothesis, Iwaniec-Luo-Sarnak further extended the admissible
support to $\operatorname{supp} \widehat{\varphi}\subset(-22/9,22/9)$. Later, for a unitary family of $\mathrm{GL}_1$ Dirichlet $L$-functions modulo a prime $q$, Hughes-Rudnick~\cite{HR} verified the Katz--Sarnak prediction for the unitary case with $\operatorname{supp} \widehat{\varphi}\subset[-2,2]$ by averaging over the nontrivial characters modulo prime $q$. In a similar setting, assuming GRH, Fiorilli--Miller~\cite{FM} enlarged the family and considered the dyadically averaged GL$_1$ family of Dirichlet $L$-functions modulo $q$ over $Q/2<q\le Q$ and obtained the expected main term for $\operatorname{supp} \widehat{\varphi}\subset(-4,4)$, with the help of the 'de-averaging' hypothesis. In a recent work, Drappeau-Pratt-Radziwill~\cite{DPR} confirm the Katz--Sarnak prediction 
for the unitary family of $\mathrm{GL}_1$ Dirichlet $L$-functions with characters ranging over primitive Dirichlet characters of conductor in $[Q/2, Q]$, with  $\operatorname{supp}\widehat{\varphi}\subset\!\bigl(-2-\tfrac{50}{1093},\,2+\tfrac{50}{1093}\bigr)$.  Their work is the first to push the support unconditionally beyond the “diagonal’’ range.\\

A natural question is what structural features enable an extension of the admissible support.
We notice that both in \cite{FM} and \cite{DPR}, they surpass the classical support $(-2,2)$ by averaging over a larger family. It suggests that a larger family may provide enlarged support. Recent work of Baluyot, Chandee, and Li \cite{BCL} also supports this observation. Assuming GRH, for a large orthogonal $GL_2$ family, they show that the one-level density prediction holds for the support of the Fourier transform of the test function in $(-4,4)$, a significant increase from the classical $(-2,2)$ barrier. The size of both families in \cite{DPR} and \cite{BCL} is $Q^2$, while the conductor remains around $Q$. The bandwidth limit for $\widehat \varphi$ of $(-2, 2)$ is a difficult barrier for the unitary family of $GL_1$ $L$-functions in \cite{DPR}. However, for the large orthogonal family of $GL_2$ $L$-functions in \cite{BCL}, it is possible to extend the support to $(-4, 4)$. In this paper, we investigate what factors affect how much we can extend the support. Is it completely predicted by the symmetry group associated with the family of $L$-functions, or is it an effect of the finer structure of the family of $L$-functions?
To answer this question, we study the one-level density of zeros for $\Gamma_1(q)$ $L$-functions, which is a large unitary family of $GL_2$ $L$-functions. We successfully extend the support of the Fourier transform of the test function to $(-\frac{8}{3},\frac{8}{3})$ under GRH, indicating that the $GL_2$ structure plays the decisive role in extending the support.\\

To be more precise, let $k \geq 3$ and $q$ be positive integers. For a Dirichlet character $\chi$ modulo $q$, let 
$S_k(\Gamma_0(q),\chi)$
denote the space of holomorphic cusp forms of weight $k$ for the congruence subgroup $\Gamma_0(q)$ with nebentypus character $\chi\pmod{q}$. Here
\[
\Gamma_0(q) = \left\{
\begin{pmatrix} a & b \\ c & d \end{pmatrix} \in SL_2(\mathbb{Z}) \;:\; c \equiv 0 \pmod{q}
\right\}.
\]
Within this setting, we are especially interested in the subgroup
\[
\Gamma_1(q) = \left\{
\begin{pmatrix} a & b \\ c & d \end{pmatrix} \in SL_2(\mathbb{Z}) \;:\; c \equiv 0 \pmod{q}, \ a \equiv d \equiv 1 \pmod{q}
\right\},
\]
and the corresponding space $S_k(\Gamma_1(q))$ of cusp forms of weight $k$. One has the natural decomposition
\[
S_k(\Gamma_1(q)) \;=\; \bigoplus_{\chi \bmod q} S_k(\Gamma_0(q),\chi).
\]
For a given character $\chi$, we suppose $H_k(q,\chi) \subseteq S_k(\Gamma_0(q),\chi)$ denotes an orthogonal basis of  $S_k(\Gamma_0(q),\chi)$ consisting of Hecke
cusp forms, normalized so that their first Fourier coefficient equals $1$. For $f \in H_k(q,\chi)$, write its Fourier expansion as
\[
f(z) = \sum_{n \geq 1} a_f(n) e^{2\pi i n z}, \qquad a_f(1) = 1.
\]
The associated Hecke eigenvalues $\lambda_f(n)$ are defined via $a_f(n) = \lambda_f(n) n^{(k-1)/2}$. The $L$-function attached to $f$ is then given, for $\Re(s) > 1$, by
\[
L(s,f) = \sum_{n=1}^\infty \frac{\lambda_f(n)}{n^s}
= \prod_p \left(1 - \frac{\lambda_f(p)}{p^s} + \frac{\chi(p)}{p^{2s}}\right)^{-1}
= \prod_p \left(1 - \frac{\alpha_f(p)}{p^s}\right)^{-1}
         \left(1 - \frac{\beta_f(p)}{p^s}\right)^{-1},
\]
where $\alpha_f(p), \beta_f(p)$ are complex numbers of absolute value $1$. When $f$ is a newform, $L(s,f)$ extends analytically to the entire complex plane and satisfies a functional equation of the shape
\[
\Lambda(s,f) := \left(\frac{q}{4\pi^2}\right)^{(s-\frac{1}{2})/2} \Gamma\!\left(s + \tfrac{k-1}{2}\right) L(s,f)
= \epsilon_f \, \Lambda(1-s,\overline{f}),
\]
with root number $\epsilon_f$ of absolute value $1$. Under GRH, the nontrivial zeros of $L(s,f)$ all lie on the critical line $\Re(s)=\tfrac{1}{2}$, and we denote them as $\rho_f = \tfrac{1}{2} + i\gamma_f$. We assume GRH throughout the paper. If $\alpha_f$ is any number associated to $f \in H_k(q,\chi)$, then its harmonic average is defined as
\[
\sumh_{f \in H_k(q,\chi)} \alpha_f
:= \frac{\Gamma(k-1)}{(4\pi)^{k-1}} \sum_{f \in H_k(q,\chi)} \frac{\alpha_f}{\langle f,f\rangle},
\]
where $\langle f,f\rangle$ is the Petersson inner product. Throughout this paper, we consider the Fourier transform of $f$ as 
\begin{equation*}
    \hat{f}(u)=\int_{-\infty}^{\infty} f(x) e^{-2\pi i x u} \ dx.
\end{equation*}
Let $\varphi$ be a smooth compactly supported function on $\mathbb{R}$, and define the $\Gamma_1(q)$-analogue of the one-level density as
\[
D_{\mathcal{F}}(\varphi) := \frac{2}{\phi(q)} \sum_{\substack{\chi \bmod q \\ \chi(-1)=(-1)^k}}
\sumh_{f \in H_k(q,\chi)} \sum_{\gamma_f}
\varphi\!\Big(\tfrac{\gamma_f \log q}{2\pi}\Big).
\]
Our main theorem is the following.

\begin{theorem}\label{Main Thm}
Assume GRH. Let $\varphi$ be an even Schwartz function with $\operatorname{supp}\widehat{\varphi} \subset (-\frac{8}{3},\frac{8}{3})$. Then, with the notation above, for $q$ prime and $k\geq 3$ odd,
\[
D_{\mathcal{F}}(\varphi) =\frac{2}{\phi(q)} \sum_{\substack{\chi \bmod q \\ \chi(-1)=-1}}
\sumh_{f \in H_k(q,\chi)} \sum_{\gamma_f}
\varphi\!\Big(\tfrac{\gamma_f \log q}{2\pi}\Big)
= \int_{-\infty}^{\infty} \varphi(x)\,dx + o(1).
\]
\end{theorem}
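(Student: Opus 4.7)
My plan is to open the zero sum via the explicit formula and then exploit the double averaging over odd characters $\chi \bmod q$ and over forms $f\in H_k(q,\chi)$ by combining the Petersson trace formula with character orthogonality. The explicit formula reduces $D_{\mathcal F}(\varphi)$ to the main term $\widehat{\varphi}(0)=\int\varphi$, an $O(1/\log q)$ contribution from the gamma factors and conductor, and a prime-power sum
\begin{equation*}
-\frac{2}{\log q}\sum_{p,\,m\geq 1}\frac{\log p}{p^{m/2}}\,\widehat{\varphi}\!\left(\tfrac{m\log p}{\log q}\right)\,\frac{2}{\phi(q)}\sum_{\chi(-1)=-1}\sumh_{f\in H_k(q,\chi)}\bigl(\alpha_f(p)^m+\beta_f(p)^m\bigr).
\end{equation*}
Terms with $m\geq 3$ converge absolutely and are $O(1/\log q)$. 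The $m=2$ term splits via $\alpha_f(p)^2+\beta_f(p)^2=\lambda_f(p^2)-\chi(p)$: the $\chi(p)$ piece vanishes on odd-$\chi$ average except on the sparse set $p\equiv\pm 1\pmod q$, while the $\lambda_f(p^2)$ piece runs in parallel with the $m=1$ analysis below, the extra $1/\sqrt p$ weight making it no worse. So the analytic core of the proof is the $m=1$ term.

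For $m=1$ I apply Petersson with $(m,n)=(1,p)$; the diagonal vanishes for prime $p$ and one is left with an off-diagonal sum over moduli $c$ with $q\mid c$ of twisted Kloosterman sums $S_\chi(1,p;c)$ against $J_{k-1}(4\pi\sqrt p/c)$. Performing the $\chi$-average first, orthogonality $\tfrac{2}{\phi(q)}\sum_{\chi(-1)=-1}\chi(a)=\mathbf{1}_{a\equiv 1\,(q)}-\mathbf{1}_{a\equiv -1\,(q)}$ restricts the residues $a$ to $\pm 1\pmod q$. Writing $c=qr$ and parametrizing $a=\pm 1+qj$, the Chinese Remainder Theorem should collapse this restricted sum, for $(q,r)=1$, to an ordinary $\mathrm{GL}_1$ Kloosterman sum $S(\pm p,\pm 1;r)$ up to an explicit unit phase; the $(q,r)>1$ case contributes a lower-order term.

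The problem is thereby reduced to a double sum over primes $p\leq q^{8/3}$ and moduli $r$ of ordinary Kloosterman sums weighted by $J_{k-1}(4\pi\sqrt p/(qr))$. The Bessel kernel localizes the mass near the transition $r\asymp\sqrt p/q$, which at the edge of support forces $r\asymp q^{1/3}$. In this range the Weil bound is too weak, and I would invoke the Kuznetsov formula (equivalently a Deshouillers--Iwaniec large sieve) to expand $S(p,1;r)$ spectrally and then bound the resulting $p$-sums of Hecke eigenvalues via Rankin--Selberg and Cauchy--Schwarz, in the spirit of the orthogonal-family analysis of \cite{BCL}. The quantitative threshold $8/3$ is explained by combining the $\mathrm{GL}_2$-type square-root cancellation from the spectral expansion with the extra factor $1/q$ gained from reducing the Kloosterman modulus from $qr$ down to $r$ via the character average---a structural feature absent from both the ILS setting and pure $\mathrm{GL}_1$ families.

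The main obstacle, as I see it, is securing enough cancellation uniformly across the full range $p\leq q^{8/3}$ in this transition regime, and verifying that the spectral expansion retains the expected saving without losing logarithmic factors. Secondary technicalities---the $q\mid r$ exceptional moduli, the parallel Petersson trace at $(1,p^2)$ arising from $m=2$, and justifying truncations and integration by parts for the Bessel weight---should be routine by comparison and not affect the quality of the main estimate.
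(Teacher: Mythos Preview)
Your opening moves---the explicit formula, discarding $m\ge 3$ by absolute convergence, splitting $m=2$ via $\alpha_f(p)^2+\beta_f(p)^2=\lambda_f(p^2)-\chi(p)$, and performing the odd-character average on the Petersson off-diagonal---agree with the paper. (In fact the paper disposes of the $\lambda_f(p^2)$ piece more crudely than you suggest: the pointwise bound $\Delta(1,p^2)=O(p/q^2)$ from Lemma~\ref{Lemma Peterson type formula} already gives $S_{sq}\ll 1/\log q$ for $\delta<4$; no parallel analysis is needed.)

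Where you diverge substantively is in the treatment of the $m=1$ off-diagonal, and here there is a genuine gap. The paper does \emph{not} pass through Kuznetsov or any spectral expansion. Instead, the Iwaniec--Li averaged Petersson formula (Lemma~\ref{Lemma Peterson type formula}) writes the off-diagonal directly as a sum over $s,t\ge 1$ of the arithmetic sums $V_{qs}(p,1;t)$ against $e((p+1)/qst)\,J_{k-1}(4\pi\sqrt p/qst)$. The condition $p\equiv m\pmod t$ is then detected by Dirichlet characters $\chi\bmod t$, producing sums of the shape $\sum_p \chi(p)\,e(p/qst)\,(\log p)/\sqrt p$. These are handled by a Mellin transform (Lemma~\ref{Lemma 6.3}) to separate the oscillatory weight, followed by GRH for Dirichlet $L$-functions, which gives $\sum_p \chi(p)(\log p)\,p^{-1/2-iw}\psi(p/P)\ll \log^2(q+|w|)$ (Lemma~\ref{Lemma 2.4}). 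Cauchy--Schwarz over $\chi\bmod t$ and orthogonality then yield the decisive bound $S_2\ll q^\epsilon P^{3/4}/q^2$, whence the threshold $\delta<8/3$.

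Your proposed mechanism---Kuznetsov followed by ``Rankin--Selberg and Cauchy--Schwarz'' on the resulting $p$-sums of Hecke eigenvalues $\sum_p \lambda_j(p)/\sqrt p$---does not give cancellation: Rankin--Selberg controls $\sum_n|\lambda_j(n)|^2$, and Cauchy--Schwarz against that yields only the trivial bound on a prime sum. To recover square-root cancellation in $\sum_p \lambda_j(p)$ you would need GRH for the Maass-form $L$-functions $L(s,u_j)$ at every level arising in the spectral expansion, a strictly stronger hypothesis than the Dirichlet GRH the paper actually uses; and even granting that, you would have to redo the numerology through the Bessel transform in Kuznetsov to see whether $8/3$ survives. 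The paper's route is both more elementary (no spectral machinery) and more economical in hypotheses, and the $8/3$ threshold drops out transparently from the inequality $P^{3/4}/q^2\ll 1$.
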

We assume $k$ to be odd so that all $f\in H_k(q,\chi)$ are newforms and that way our calculations are brief. For even 
$k$, all forms $f \in H_k(q,\chi)$ are newforms, except possibly when $\chi$ is trivial and $f$ is induced from a full-level cusp form. Removing this assumption is somewhat cumbersome but not difficult. 

For completeness, we provide the proof of Theorem \ref{Main Thm} in Section \ref{Sussection 3.2} assuming Lemmas \ref{lemma 4.2 a} and \ref{Lemma 4.3}. The main technical work is the proof of Lemmas \ref{lemma 4.2 a} and \ref{Lemma 4.3}, which comprises the bulk of the paper (see Sections \ref{Section 6} and \ref{Section 7}). Before we discuss those details, we highlight an application of Theorem \ref{Main Thm}. 
The technique developed in this paper can be extended to study higher-level densities of the same family with the same support, which the author and Chandee are currently investigating.

\subsection{Application}
As a direct application, we can use our result to find the proportion of non-vanishing at the central point for this large unitary family. From the following result, we get that the proportion of forms in the family with non-vanishing at the central point is at least $62.5\%$, assuming GRH. This is the record non-vanishing proportion associated to any unitary family so far.
\begin{corollary}
Assume GRH. The harmonic proportion of non-vanishing $\Gamma_1(q)$ $L$-functions is at least $\frac{5}{8}$, in the sense that
\begin{equation}
\liminf_{q\to\infty}
\frac{2}{\phi(q)}
\sum_{\substack{\chi \bmod q \\ \chi(-1)=-1}}
\sumh_{f\in H_k(q,\chi)}
\mathbf{1}_{\{L(\frac12,f)\neq 0\}}
\ge \frac{5}{8}.
\end{equation}
\end{corollary}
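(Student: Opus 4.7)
The plan is to apply Theorem~\ref{Main Thm} to a carefully chosen non-negative test function and exploit positivity of the sum over zeros, following the Iwaniec--Luo--Sarnak framework for deducing non-vanishing from one-level density. For each small $\delta>0$, I would take $\varphi=\varphi_\delta$ to be the even Schwartz function whose Fourier transform is the Fejér triangle
\[
\widehat{\varphi_\delta}(u) \;=\; \Bigl(1 - \tfrac{|u|}{\,8/3-\delta\,}\Bigr)_+,
\]
supported in $\bigl[-(8/3-\delta),\,8/3-\delta\bigr] \subset (-8/3,\,8/3)$. Then $\varphi_\delta$ is a (non-negative) squared-sinc, and Fourier inversion at $x=0$ gives $\varphi_\delta(0)=\int \widehat{\varphi_\delta}(u)\,du = 8/3-\delta$ while $\int \varphi_\delta(x)\,dx=\widehat{\varphi_\delta}(0) = 1$.

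Since $\varphi_\delta\ge 0$, every non-trivial zero contributes non-negatively to the inner sum defining $D_{\mathcal{F}}(\varphi_\delta)$. If $L(\tfrac12,f)=0$ with multiplicity $m_f\ge 1$, then the central zero alone contributes at least $m_f\,\varphi_\delta(0)$, giving
\[
\sum_{\gamma_f}\varphi_\delta\!\Big(\tfrac{\gamma_f\log q}{2\pi}\Big) \;\geq\; m_f\,\varphi_\delta(0) \;\geq\; \varphi_\delta(0)\cdot \mathbf{1}\bigl[L(\tfrac12,f)=0\bigr].
\]
Summing over odd characters $\chi \pmod q$ with the harmonic weight and invoking Theorem~\ref{Main Thm}, the left-hand side tends to $\int \varphi_\delta(x)\,dx = 1$, while the right-hand side equals $\varphi_\delta(0)$ times the harmonic-weighted proportion of vanishing forms. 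Hence the harmonic vanishing proportion is at most $1/\varphi_\delta(0) = 1/(8/3-\delta)$, and letting $\delta\to 0^+$ yields the complementary non-vanishing proportion $\ge 1 - 3/8 = 5/8$.

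There is no real analytic obstacle beyond invoking Theorem~\ref{Main Thm} itself. The only minor technicalities are (a) the open support condition $(-8/3,8/3)$, which is absorbed by the parameter $\delta$, and (b) the observation that the Fejér choice is optimal among non-negative bandlimited functions: writing any such $\varphi = |h|^2$ with $\operatorname{supp}\widehat{h}\subset(-\nu/2,\nu/2)$ and applying Cauchy--Schwarz gives $\int \varphi/\varphi(0) \ge 1/\nu$ for $\operatorname{supp}\widehat{\varphi}\subset(-\nu,\nu)$, with equality for the Fejér kernel. Consequently $5/8$ is the best non-vanishing proportion extractable from the one-level density at support $8/3$; any improvement would require either a larger admissible support or higher-level density input.
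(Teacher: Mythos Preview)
Your proof is correct. The paper's argument differs only in packaging: rather than constructing the Fej\'er test function explicitly, it invokes Theorem~2 of Carneiro--Chirre--Milinovich \cite{CCM}, which phrases the extremal problem in reproducing-kernel terms and reads off the vanishing bound as $1/K(0,0)$ with $K(x,x)=\sin(8\pi x/3)/(\pi x)$, so $K(0,0)=8/3$ and the vanishing proportion is at most $3/8$. For a unitary family the CCM extremizer \emph{is} the Fej\'er kernel, so the two routes coincide numerically; your version is more elementary and self-contained, while the paper's citation implicitly certifies optimality---a point you recover independently via the Cauchy--Schwarz/Paley--Wiener remark at the end.
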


\begin{proof}
Applying Theorem~\ref{Main Thm} and Theorem~2(i) of \cite{CCM}, we obtain
\[
\limsup_{q\to\infty}
\frac{2}{\phi(q)}
\sum_{\substack{\chi \bmod q \\ \chi(-1)=-1}}
\sumh_{f\in H_k(q,\chi)}
\operatorname{ord}_{s=\frac12}L(s,f)
\le \frac{1}{K(0,0)},
\]
where $K(w,z)$ is the reproducing kernel in the unitary case. Since the support is
\[
\operatorname{supp}\widehat{\varphi} \subset (-\frac{8}{3},\frac{8}{3}),
\]
we have
\[
K(w,z)=\frac{\sin\!\left(\frac{8\pi}{3}(z-\overline{w})\right)}{\pi (z-\overline{w})},
\]
and hence
\[
K(0,0)=\frac83.
\]
Therefore,
\[
\limsup_{q\to\infty}
\frac{2}{\phi(q)}
\sum_{\substack{\chi \bmod q \\ \chi(-1)=-1}}
\sumh_{f\in H_k(q,\chi)}
\operatorname{ord}_{s=\frac12}L(s,f)
\le \frac38.
\]

Now, for each $f$,
\[
\mathbf{1}_{\{L(\frac12,f)=0\}}
\le \operatorname{ord}_{s=\frac12}L(s,f).
\]
Hence,
\[
\limsup_{q\to\infty}
\frac{2}{\phi(q)}
\sum_{\substack{\chi \bmod q \\ \chi(-1)=-1}}
\sumh_{f\in H_k(q,\chi)}
\mathbf{1}_{\{L(\frac12,f)=0\}}
\le \frac38.
\]
Taking complements gives
\[
\liminf_{q\to\infty}
\frac{2}{\phi(q)}
\sum_{\substack{\chi \bmod q \\ \chi(-1)=-1}}
\sumh_{f\in H_k(q,\chi)}
\mathbf{1}_{\{L(\frac12,f)\neq 0\}}
\ge 1-\frac38=\frac58.
\]
This completes the proof.
\end{proof}
We now briefly discuss the outline of the proof before proceeding with the proof of Theorem \ref{Main Thm}.

\subsection{Outline of the Proof}
After applying the explicit formula, it remains to bound the following off-diagonal terms.
\begin{equation*}
    \frac{2}{\phi(q)}\sum_{\substack{\chi(q) \\ \chi(-1)=-1 }} \sumh_{f \in H_k(q,\chi)}\sum_pa_p\lambda_f(p),
\end{equation*}
where
\begin{equation*}
    a_n = \frac{\Lambda(n)}{\sqrt{n}} \hat{\varphi}\left( \frac{\log n}{\log q} \right).
\end{equation*}
Using orthogonality and rearranging the exponential sum, we obtain 
\begin{equation*}
    \mathcal{K} \sum_{s,t}\frac{2\pi}{qst}\sumstar_{m(t)} \sumstar_{\substack{x(t) \\ (x+qs,t)=1 }}e\left(\frac{m\overline{x}-n\overline{x+qs}}{t}\right)\sum_{\substack{p \\ p\equiv m(t) }} a_p e\left(\frac{p+1}{qst}\right)J_{k-1}\left(\frac{4\pi \sqrt p}{qst}\right).
\end{equation*}
We express $p\equiv m\ (\bmod \ t)$ using Dirichlet characters. To illustrate the idea of the proof we consider the transition region $st\sim \frac{\sqrt p}{q}$ with $p\sim P$.
Considering the trivial character, we essentially get
\begin{equation*}
S_1:=\sum_{st \sim \frac{\sqrt P}{q}}\frac{1}{qst}\sum_p \frac{\log p}{\sqrt p}\ e\Bigg(\frac{p}{qst}\Bigg) w\Bigg(\frac{p}{P}\Bigg).
\end{equation*}
We use Mellin transform for $e\bigg(\frac{p}{qst}\bigg)w\bigg(\frac{p}{P}\bigg)$ to get
\begin{equation*}
 \int_{(1)} \sum_{st \sim\frac{\sqrt p}{q}}  \frac{1}{qst}\sum_{p} \frac{\log p}{p^{1/2+z}}\tilde{w}(z)\ dz,
\end{equation*}
where
\[
   \tilde{w}(z)\ll
   \begin{cases}
      \sqrt{\tfrac{qst}{P}}, & \text{if } |\Im (z)|\sim \frac{P}{qst}, \\[6pt]
      \dfrac{1}{\left(|\Im (z)|+\tfrac{P}{qst}\right)^A}, & \text{otherwise},
   \end{cases}
\]
Assuming GRH, we move the contour to $\Re(z)>0$. From the bound on $\widetilde{w}(z)$ we obtain
$S_1 \ll \frac{P^{1/4}}{q};$
in particular, $S_1$ is small provided $P \ll q^{4-\delta}$.\\

For non-trivial characters, we apply Cauchy-Schwarz and the orthogonality relation of Dirichlet characters, and essentially we get
\begin{equation*}
    S_2:=\frac{1}{q}\sum_{st\sim \frac{\sqrt P}{q}}\frac{1}{s}\Bigg(\sum_{\substack{\chi(\bmod t) \\ \chi\neq\chi_0 }}\Bigg |\sum_{p}\frac{\chi(p)\log p}{\sqrt p}e\Bigg(\frac{p}{qst}\Bigg)\psi\Bigg(\frac{p}{P}\Bigg)\Bigg|^2\Bigg)^{\frac{1}{2}}.
\end{equation*}
Again, using the Mellin transform for $e\bigg(\frac{p}{qst}\bigg)\psi\bigg(\frac{p}{P}\bigg)$ and similar bounding techniques as before, assuming GRH, we get
$S_2\ll \frac{P^{\frac{3}{4}}}{q^2},$ which is small when $P\leq q^{\frac{8}{3}-\delta}$.

\section{PRELIMINARY LEMMAS}

In this section, we collect several lemmas that will be used later in the paper. We begin
with the orthogonality relations for Dirichlet characters modulo $q$.

\begin{lemma}
Let $q\ge 3$. Then for integers $m,n$,
\begin{equation}\label{Lemma Orthogonality of Ch}
\frac{2}{\phi(q)}\sum_{\substack{\chi\bmod q\\ \chi(-1)=-1}} \chi(m)\overline{\chi(n)}
=
\begin{cases}
1, & m\equiv n\pmod q \text{ and }(mn,q)=1,\\[2mm]
-1, & m\equiv -n\pmod q \text{ and }(mn,q)=1,\\[2mm]
0, & \text{otherwise.}
\end{cases}
\end{equation}
\end{lemma}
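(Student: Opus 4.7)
The plan is to deduce this restricted orthogonality relation from the classical orthogonality over all Dirichlet characters modulo $q$ by inserting the parity projector
\[
\frac{1-\chi(-1)}{2} = \begin{cases} 1, & \chi(-1)=-1,\\ 0, & \chi(-1)=1,\end{cases}
\]
which singles out the odd characters.

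First, I would write
\[
\sum_{\substack{\chi\bmod q\\ \chi(-1)=-1}} \chi(m)\overline{\chi(n)}
= \frac{1}{2}\sum_{\chi\bmod q}\bigl(1-\chi(-1)\bigr)\chi(m)\overline{\chi(n)}
= \frac{1}{2}\sum_{\chi}\chi(m)\overline{\chi(n)} \;-\; \frac{1}{2}\sum_{\chi}\chi(-m)\overline{\chi(n)},
\]
using $\chi(-1)\chi(m) = \chi(-m)$. Then, after multiplying by $2/\phi(q)$ and applying the standard orthogonality relation
\[
\frac{1}{\phi(q)}\sum_{\chi\bmod q}\chi(a)\overline{\chi(n)} = \mathbf{1}\bigl[a\equiv n\!\!\pmod q,\ (an,q)=1\bigr]
\]
to each of the two sums on the right, I obtain
\[
\frac{2}{\phi(q)}\sum_{\substack{\chi\bmod q\\ \chi(-1)=-1}} \chi(m)\overline{\chi(n)}
= \mathbf{1}\bigl[m\equiv n\!\!\pmod q,\ (mn,q)=1\bigr] - \mathbf{1}\bigl[m\equiv -n\!\!\pmod q,\ (mn,q)=1\bigr].
\]

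Finally, I would check that the two indicator functions never contribute simultaneously, so the three cases in the statement indeed cover all possibilities disjointly: if $m\equiv n$ and $m\equiv -n\pmod q$ with $(n,q)=1$, then $q\mid 2n$ forces $q\mid 2$, which is impossible under the hypothesis $q\ge 3$. There is no serious obstacle here; the proof is a one-line manipulation once the parity projector is inserted, and the hypothesis $q\ge 3$ is used only to ensure the two congruence cases do not overlap.
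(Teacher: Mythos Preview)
Your proof is correct and is the standard argument. The paper itself does not supply a proof for this lemma, treating it as well known; your parity-projector derivation is exactly the expected one.
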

Define the operator $\mathcal{K}$ by
\[
\mathcal{K}f = i^{-k} f + i^{k} \overline{f} = 2\Re\big(i^{-k} f\big).
\]
Next, we record a Petersson-type formula suitable for summing over the family. This gives an expression for sums over Dirichlet characters and normalized Fourier coefficients of cusp forms.

\begin{lemma}\label{Lemma Peterson type formula}
For any positive integers $m,n$, define
\begin{equation*}
\Delta(m,n):=\frac{2}{\phi(q)}\sum_{\substack{\chi\bmod q\\\chi(-1)=-1}}
\sumh_{f\in H_k(q,\chi)} \overline{\lambda_f(m)}\lambda_f(n).
\end{equation*}
Then
\begin{equation}\label{Equation Peterson type formula}
\Delta(m,n)=\delta(m,n)+\sigma(m,n),
\end{equation}
where
\[
\sigma(m,n)
=\mathcal{K}\sum_{s}\sum_{t}\frac{2\pi}{qst}\,V_{qs}(m,n;t)\,e\!\Big(\frac{m+n}{qst}\Big)
J_{k-1}\!\Big(\frac{4\pi\sqrt{mn}}{qst}\Big),
\]
and
\[
V_{qs}(m,n;t)=\sum_{\substack{x\bmod t\\ (x(x+qs),t)=1}} e\!\Big(\frac{m\overline{x}-\overline{nx+qs}}{t}\Big).
\]
Moreover,
\begin{equation}  \label{Equation bound on peterson type formula}
\Delta(m,n)=\delta(m,n)+O\!\Big(\frac{\sqrt{mn}}{q^2}\Big). 
\end{equation}
\end{lemma}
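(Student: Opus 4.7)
The plan is to apply the classical Petersson trace formula to each nebentypus $\chi$, average over odd $\chi \bmod q$ via Lemma~\ref{Lemma Orthogonality of Ch}, and then rewrite the resulting averaged Kloosterman-type sum in the shape stated in the lemma. More precisely, for each odd $\chi \bmod q$ Petersson's formula yields
\[
\sumh_{f \in H_k(q,\chi)} \overline{\lambda_f(m)}\lambda_f(n)
= \delta(m,n) + 2\pi i^{-k} \sum_{q \mid c} \frac{S_\chi(m,n;c)}{c}\, J_{k-1}\!\left(\frac{4\pi\sqrt{mn}}{c}\right),
\]
where $S_\chi(m,n;c) = \sum_{a\bmod c,\,(a,c)=1}\chi(a)\, e((ma + n\bar a)/c)$. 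Inserting Lemma~\ref{Lemma Orthogonality of Ch} into the character average kills $\chi(a)$ except on the residues $a \equiv \pm 1 \pmod q$, which enter with opposite signs. The substitution $a \mapsto -a$ in the $a\equiv -1$ piece produces the complex conjugate of the $a\equiv 1$ piece, so the $\chi$-averaged Kloosterman sum collapses to $T - \overline T$, where $T$ denotes the contribution of $a \equiv 1 \pmod q$. Because $k$ is odd we have $i^k = -i^{-k}$, hence $i^{-k}(T-\overline T) = i^{-k} T + i^k\overline T = \mathcal{K} T$; this is how the operator $\mathcal{K}$ appears.

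Next, I would write $c = qt$ and parametrize $a = 1 + qs$ with $s$ running over residues mod $t$, so that the coprimality condition $(a,qt)=1$ reduces to $(1+qs, t) = 1$. To split $\exp(2\pi i(ma + n\bar a)/(qt))$ into the advertised factors, one needs $\bar a \pmod{qt}$: by CRT, write $\bar a \equiv 1 + qv \pmod{qt}$, where $v = v(s)$ is determined by $(1+qs)(1+qv) \equiv 1 \pmod{qt}$, i.e.\ $v \equiv -s\,\overline{1+qs}^{\,t} \pmod t$. The prefactors then combine into $e((m+n)/(qt))$, and a further substitution from $s$ to the variable $x$ of $V_{qs}(m,n;t)$ should cast the remaining exponent into the stated Kloosterman-like form. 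The main obstacle is precisely this step: careful bookkeeping of inverses modulo $qt$ versus modulo $t$, and identifying the substitution that reproduces $V_{qs}(m,n;t)$ exactly with the right coprimality condition $(x(x+qs),t)=1$.

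For the bound \eqref{Equation bound on peterson type formula}, I would use the trivial estimate $|V_{qs}(m,n;t)| \leq \phi(t) \leq t$ together with $|J_{k-1}(x)| \leq 1$ and the expansion $J_{k-1}(x) \ll x^{k-1}$ (valid for $k\geq 3$ and $x\leq 1$). Splitting the double sum over $s,t$ at $qst \asymp \sqrt{mn}$: the range $qst \leq 4\pi\sqrt{mn}$ contributes $\ll \sum_{qst \leq \sqrt{mn}} (qs)^{-1} \ll \sqrt{mn}/q^2$ by bounding $|J_{k-1}|\leq 1$, while the tail $qst > 4\pi\sqrt{mn}$ is tamed by the $(\sqrt{mn}/qst)^{k-1}$ decay of the Bessel function, and a short computation over the range of $t$ yields the same order $\sqrt{mn}/q^2$.
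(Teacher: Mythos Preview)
The paper gives no self-contained argument here; it simply cites Lemma~2.1 and Equation~(2.14) of Iwaniec--Li~\cite{IL}, and your outline is exactly the Iwaniec--Li derivation. Your handling of the $\chi$-average, the emergence of $\mathcal{K}$ via $i^{k}=-i^{-k}$ for odd $k$, and the splitting argument for~\eqref{Equation bound on peterson type formula} are all correct.

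The step you flag as the ``main obstacle'' is not a single-variable substitution $s\to x$, however, and this is where your sketch is slightly off. In your notation ($c=qt$, $a=1+qb$ with $b$ running over residues mod $t$) the lemma's double sum arises from a further \emph{gcd decomposition}: set $s'=\gcd(b,t)$, $t'=t/s'$, and $\bar{x}=b/s'$, which is automatically a unit mod $t'$. This gives a bijection between residues $b$ mod $t$ and triples $(s',t',x)$ with $s't'=t$, $x\in(\mathbb{Z}/t'\mathbb{Z})^{\times}$, matching the identity $\sum_{d\mid t}\phi(t/d)=t$. Under it one checks directly that $a=1+qs'\bar{x}$ has inverse $\bar{a}\equiv 1-qs'\,\overline{x+qs'}\pmod{qs't'}$, so that
\[
e\Big(\frac{ma+n\bar a}{c}\Big)=e\Big(\frac{m+n}{qs't'}\Big)\,e\Big(\frac{m\bar{x}-n\,\overline{x+qs'}}{t'}\Big),
\]
and the condition $(1+qb,t)=1$ becomes exactly $(x(x+qs'),t')=1$. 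Relabelling $(s',t')\to(s,t)$ yields $c=qst$ and the summand of $\sigma(m,n)$ as stated. So the passage requires introducing a genuinely new summation variable (via the gcd), not merely renaming one.
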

The proof of Equation \eqref{Equation Peterson type formula} can be found in Lemma 2.1 of \cite{IL}, while the proof of Equation \eqref{Equation bound on peterson type formula} is given in Equation (2.14) of \cite{IL}.\\

We now state a lemma summarizing some standard formulas related to the $J$-Bessel function.

\begin{lemma}\label{Lemma J-Bessel Function}
Let $J_{k-1}$ be the $J$-Bessel function of order $k-1$. Then
\begin{equation}\label{J-Bessel expansion 1}
    J_{k-1}(2\pi x) = \frac{1}{2\pi \sqrt{x}}
    \left[w_k(2\pi x)\,e\!\left(x-\frac{k}{4}+\frac{1}{8}\right)
    + \overline{w}_k(2\pi x)\,e\!\left(-x+\frac{k}{4}-\frac{1}{8}\right)\right],
\end{equation}
where $w_k^{(j)}(x)\ll_{j,k}x^{-j}$. Moreover,
\begin{equation}\label{J-Bessel expansion 2}
    J_{k-1}(2x)=\sum_{l=0}^{\infty}(-1)^l \,
    \frac{x^{2l+k-1}}{l!\,(l+k-1)!},
\end{equation}
and
\begin{equation} \label{bound of J bessel}
    J_{k-1}(x)\ll \min\{x^{-\frac{1}{2}},\,x^{k-1}\}.
\end{equation}
\end{lemma}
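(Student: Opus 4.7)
The plan is to derive all three formulas from classical facts about the Bessel function $J_\nu$ for integer order $\nu=k-1$, as found in any standard reference such as Watson's treatise on Bessel functions. Formula \eqref{J-Bessel expansion 2} is immediate from the standard power series representation
\[
J_\nu(z)=\sum_{l=0}^{\infty} \frac{(-1)^l}{l!\,\Gamma(l+\nu+1)}\pr{\frac{z}{2}}^{2l+\nu}.
\]
Setting $\nu=k-1$ so that $\Gamma(l+k)=(l+k-1)!$ and substituting $z=2x$ recovers \eqref{J-Bessel expansion 2} verbatim, since for integer order there is no gamma function in the denominator.

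For the asymptotic expansion \eqref{J-Bessel expansion 1}, I would invoke the Hankel asymptotic in the form
\[
J_\nu(z)=\sqrt{\tfrac{2}{\pi z}}\,\bigl[P_\nu(z)\cos\theta_\nu(z)-Q_\nu(z)\sin\theta_\nu(z)\bigr],\qquad \theta_\nu(z):=z-\tfrac{\nu\pi}{2}-\tfrac{\pi}{4},
\]
where $P_\nu,Q_\nu$ are smooth functions admitting asymptotic expansions in inverse powers of $z$ whose derivatives satisfy $P_\nu^{(j)}(z),\,Q_\nu^{(j)}(z)\ll_{j,\nu} z^{-j}$ for $z\ge 1$. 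Writing cosine and sine as combinations of exponentials and setting $W_\nu:=P_\nu-iQ_\nu$, this becomes
\[
J_\nu(z)=\frac{1}{\sqrt{2\pi z}}\,\bigl[W_\nu(z)\,e^{i\theta_\nu(z)}+\overline{W_\nu(z)}\,e^{-i\theta_\nu(z)}\bigr].
\]
Substituting $z=2\pi x$ and $\nu=k-1$, the phase equals $2\pi x-\tfrac{(k-1)\pi}{2}-\tfrac{\pi}{4}=2\pi\bigl(x-\tfrac{k}{4}+\tfrac{1}{8}\bigr)$, and the prefactor collapses to $\tfrac{1}{2\pi\sqrt{x}}$. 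Defining $w_k(y):=W_{k-1}(y)$ then yields \eqref{J-Bessel expansion 1}, with the derivative bound $w_k^{(j)}(x)\ll_{j,k} x^{-j}$ inherited from the corresponding bound on $W_{k-1}$ via the chain rule (the inner factor $2\pi$ from the variable change is harmless).

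The uniform bound \eqref{bound of J bessel} follows by combining the two expansions: for $x\le 1$ the power series \eqref{J-Bessel expansion 2} is dominated by its leading term $\tfrac{x^{k-1}}{2^{k-1}(k-1)!}$, yielding $J_{k-1}(x)\ll x^{k-1}$; for $x\ge 1$ the asymptotic \eqref{J-Bessel expansion 1} together with $|w_k(2\pi x)|\ll 1$ gives $J_{k-1}(x)\ll x^{-1/2}$. Taking the minimum recovers \eqref{bound of J bessel}. As this lemma is a compendium of classical facts about Bessel functions, there is no genuine technical obstacle; the only care required is bookkeeping the phase constant $-k/4+1/8$ and the prefactor $1/(2\pi\sqrt{x})$ after the change of variable $z=2\pi x$, as done above.
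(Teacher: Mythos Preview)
Your proposal is correct and takes essentially the same approach as the paper, which simply refers the reader to Watson's treatise \cite{W} for all three statements. You have merely spelled out the standard derivations (power series, Hankel asymptotic, and the resulting uniform bound) in more detail than the paper does, with the bookkeeping of the phase $-k/4+1/8$ and the prefactor $1/(2\pi\sqrt{x})$ carried out correctly.
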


\begin{proof}
See \cite{W} for the details of the proof.
\end{proof}

The next lemma provides a bound for certain prime sums twisted by Dirichlet characters.  
This estimate will be used to control the error terms in Sections \ref{Section 6} and \ref{Section 7}, once the congruence condition has been removed by insertion of characters.

\begin{lemma}\label{Lemma 2.4}
Assume the Generalized Riemann Hypothesis (GRH) for $L(s,\chi)$ with $\chi \bmod q$.  
Let $\psi$ be a smooth function supported in $(a,b)$, where $0<a<b$.  
Write $z=\sigma+it$ with 
\[
\sigma \in \left(\tfrac{1}{2}-\tfrac{10}{\log q}, \, \tfrac{1}{2}+\tfrac{10}{\log q}\right), \qquad t \in \mathbb{R}.
\]
If $\chi$ is a nontrivial character, then
\begin{equation*}
    \sum_p \frac{\chi(p)\log p \,\psi(p/X)}{p^z}
    \ll \log^2(q+|t|)\,
    \max_{a\leq x \leq b} |\psi^{(3)}(x)|.
\end{equation*}
If $\chi$ is the trivial character, then for any $A>0$,
\begin{equation*}
    \sum_p \frac{\chi(p)\log p \,\psi(p/X)}{p^z}
    \ll \log^2(q+|t|)\,
    \max_{a\leq x \leq b} |\psi^{(3)}(x)|
    + \frac{\sqrt{X}}{(1+|t|)^A}\,
    \max_{a\leq x \leq b} |\psi^{(A)}(x)|.
\end{equation*}

\end{lemma}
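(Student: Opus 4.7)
My plan is a Mellin/contour argument. First, I would represent $\psi(p/X)$ through its Mellin transform $\tilde\psi(w)=\int_0^{\infty}\psi(x)x^{w-1}\,dx$. Since $\psi\in C_c^{\infty}(a,b)$ with $0<a<b$, $\tilde\psi$ is entire, and $j$-fold integration by parts yields
\begin{equation*}
  \tilde\psi(w)\ll_j \frac{\max_{a\le x\le b}|\psi^{(j)}(x)|}{|w(w+1)\cdots(w+j-1)|}
\end{equation*}
in every bounded vertical strip. Mellin inversion combined with interchange of sum and integral on a contour $\Re(w)=c$ large enough that $\Re(z+w)>1$ rewrites the sum as
\begin{equation*}
  \frac{1}{2\pi i}\int_{(c)}\tilde\psi(w)\,X^{w}\sum_{p}\frac{\chi(p)\log p}{p^{z+w}}\,dw.
\end{equation*}
On this contour I would use
\begin{equation*}
  \sum_{p}\frac{\chi(p)\log p}{p^{s}}=-\frac{L'}{L}(s,\chi)-H(s,\chi),
\end{equation*}
where $H(s,\chi):=\sum_{k\ge 2}\sum_{p}\chi(p)^{k}\log p / p^{ks}$ is analytic and uniformly $O(1)$ for $\Re(s)>1/2+\eta$.

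Next I would shift the contour to $\Re(w)=\delta$ with $\delta$ a small positive multiple of $1/\log q$, chosen so that $\Re(z+w)\ge 1/2+10/\log q$ on the new line. Under GRH, $-L'/L(\cdot,\chi)$ is holomorphic in this region when $\chi$ is nontrivial, so no poles are crossed. When $\chi=\chi_0$, the simple pole of $-L'/L(s,\chi_0)$ at $s=1$ is crossed at $w=1-z$, contributing a residue $X^{1-z}\tilde\psi(1-z)$. Because $|X^{1-z}|\asymp X^{1/2}$ and $\Im(1-z)=-t$, the decay of $\tilde\psi$ bounds this residue by $\sqrt X(1+|t|)^{-A}\max_{a\le x\le b}|\psi^{(A)}(x)|$, which is precisely the extra term in the trivial-character statement.

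For the integral on the new line, I would invoke the standard GRH-conditional bound $L'/L(\sigma+iv,\chi)\ll\log^{2}(q(|v|+2))$, valid for $\sigma\ge 1/2+c/\log(q(|v|+2))$, together with $|\tilde\psi(w)|\ll(1+|w|)^{-3}\max|\psi^{(3)}|$ from three integrations by parts. The vertical integral over $\Im(w)=v$ is then controlled by
\begin{equation*}
  \int_{-\infty}^{\infty}\frac{\log^{2}(q+|t|+|v|)}{(1+|v|)^{3}}\,dv\ll \log^{2}(q+|t|),
\end{equation*}
yielding the asserted $\log^{2}(q+|t|)\max|\psi^{(3)}|$ bound. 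The contribution from the $-H(z+w,\chi)$ piece, after a trivial estimate on the absolutely convergent series, is $O(\max|\psi^{(3)}|)$ and is absorbed into the same bound.

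The main technical input is the GRH-conditional estimate for $L'/L$ immediately to the right of the critical line; this I would cite from the literature rather than reprove. Once that is granted, the contour shift, residue calculation, and rapid decay of $\tilde\psi$ combine routinely to give the two cases of the lemma. A minor subtlety to be careful about is calibrating $\delta$ so that both the GRH bound applies uniformly in $v$ and the pole $w=1-z$ is correctly enclosed when $\chi=\chi_0$; the choice $\delta=20/\log q$ is safe given the stated range of $\sigma$.
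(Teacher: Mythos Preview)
Your proposal is correct and follows the standard Mellin--contour argument that underlies the cited result (the paper itself does not give a proof but refers to Lemma~2.3 of \cite{CLL}, which is proved in exactly this way). One small imprecision: on the shifted line $\Re(z+w)=\tfrac12+O(1/\log q)$ the higher-prime-power piece $H(z+w,\chi)$ is not uniformly $O(1)$ but rather $O(\log q)$ (the $k=2$ term contributes $\sum_p \log p\,/p^{1+O(1/\log q)}\asymp \log q$); this is harmless since it is still absorbed into the $\log^2(q+|t|)$ bound.
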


\begin{proof}
See Lemma 2.3 of \cite{CLL}.
\end{proof}

We conclude this section with the explicit formula, which relates the zeros of automorphic $L$-functions to sums over prime powers.  
This formula is the starting point for the proof of Theorem \ref{Main Thm}.

\begin{lemma}\label{Lemma Explicit Formula}
Let $\varphi$ be an even Schwartz function whose Fourier transform has compact support. Then
\begin{equation*}
    \sum_{\gamma_f}\varphi\!\left(\frac{\gamma_f}{2\pi}\log q\right)
    = -\frac{1}{\log q}\sum_{n=1}^{\infty}
      \frac{\Lambda(n)\,[C_f(n)+C_{\bar{f}}(n)]}{\sqrt n}\,
      \hat{\varphi}\!\left(\frac{\log n}{\log q}\right)
      + \int_{-\infty}^{\infty}\varphi(x)\,dx
      + O_k\!\left(\frac{1}{\log q}\right),
\end{equation*}
where \\  \begin{equation}\label{C_f}
     C_f(n)=\begin{cases}
\alpha_f(p)^l+\beta_f(p)^l , & \text{ if } n=p^l,\\[2mm]
0, & \text{otherwise.}
\end{cases}
 \end{equation}

\begin{proof}
See Lemma 2.5 of \cite{BCL}.
\end{proof}
\end{lemma}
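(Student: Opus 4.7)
The plan is to apply a standard Riemann--von Mangoldt / Guinand--Weil explicit formula to the cusp-form $L$-function $L(s,f)$. First I would consider the completed $L$-function $\Lambda(s,f)=(q/4\pi^{2})^{(s-1/2)/2}\Gamma(s+(k-1)/2)L(s,f)$, which is entire of order one and satisfies $\Lambda(s,f)=\epsilon_f\Lambda(1-s,\bar f)$; its logarithmic derivative has simple poles exactly at the non-trivial zeros $\tfrac12+i\gamma_f$ with residue $1$. By Paley--Wiener, the test function $\Phi(s):=\varphi((s-\tfrac12)\log q/(2\pi i))$ extends to an entire function of exponential type. Applying Cauchy's theorem to $(\Lambda'/\Lambda)(s,f)\,\Phi(s)$ on a rectangle with vertical sides at $\Re s=2$ and $\Re s=-1$, and letting the height go to infinity (the horizontal pieces vanish by the Schwartz decay of $\Phi$ together with the standard convexity bounds on $\Lambda'/\Lambda$ between zeros), yields
\[
\sum_{\gamma_f}\varphi\!\Big(\tfrac{\gamma_f\log q}{2\pi}\Big)
=\frac{1}{2\pi i}\int_{(2)}\!\tfrac{\Lambda'}{\Lambda}(s,f)\,\Phi(s)\,ds
-\frac{1}{2\pi i}\int_{(-1)}\!\tfrac{\Lambda'}{\Lambda}(s,f)\,\Phi(s)\,ds.
\]

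Next I would fold the $\Re s=-1$ integral back to $\Re s=2$ using the functional-equation identity $(\Lambda'/\Lambda)(s,f)=-(\Lambda'/\Lambda)(1-s,\bar f)$, combined with the evenness of $\varphi$ (which forces $\Phi(1-s)=\Phi(s)$). This converts the right side into
\[
\frac{1}{2\pi i}\int_{(2)}\!\Big(\tfrac{\Lambda'}{\Lambda}(s,f)+\tfrac{\Lambda'}{\Lambda}(s,\bar f)\Big)\Phi(s)\,ds,
\]
which I split into three pieces according to the decomposition of $\Lambda'/\Lambda$: the prime piece $(L'/L)(s,f)+(L'/L)(s,\bar f)$, the log-conductor piece $\log(q/4\pi^{2})$, and the archimedean piece $2(\Gamma'/\Gamma)(s+(k-1)/2)$.

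Finally, I would evaluate each piece separately. For the prime piece, I insert the absolutely convergent Dirichlet series $-(L'/L)(s,f)=\sum_n\Lambda(n)C_f(n)n^{-s}$, interchange sum and integral, and use the identity $\tfrac{1}{2\pi i}\int_{(2)}n^{-s}\Phi(s)\,ds=(1/\sqrt n\,\log q)\,\widehat\varphi(\log n/\log q)$, obtained by shifting to $\Re s=\tfrac12$ and applying the substitution $u=t\log q/(2\pi)$; this reproduces exactly the prime sum in the lemma, with the claimed sign. For the log-conductor piece, the same contour shift yields $\tfrac{\log(q/4\pi^{2})}{\log q}\int\varphi=\int\varphi+O(1/\log q)$. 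For the Gamma piece, I shift to $\Re s=\tfrac12$ (crossing no poles, since for $k\ge 3$ the poles of $\Gamma(s+(k-1)/2)$ lie at $\Re s\le -1$) and apply the uniform estimate $(\Gamma'/\Gamma)(k/2+it)\ll_k\log(2+|t|)$ to obtain the admissible error $O_k(1/\log q)$. The main technical obstacle, namely the careful justification of the vanishing of the horizontal pieces and the shift-of-contour interchange with the Dirichlet series, is entirely classical and is carried out in detail in Lemma 2.5 of \cite{BCL}, which is why the present statement can simply be cited from there.
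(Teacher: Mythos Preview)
Your proposal is correct and aligns with the paper, whose entire proof is the single citation ``See Lemma~2.5 of \cite{BCL}.'' Your sketch is precisely the standard Guinand--Weil explicit-formula argument that underlies that reference, so there is nothing to add or correct.
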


\section{PROOF OF THEOREM \ref{Main Thm}} \label{Section 4}
\subsection{Initial setup}
Applying the explicit formula from Lemma \ref{Lemma Explicit Formula} to Theorem \ref{Main Thm}, we obtain the following.
\begin{align}
    \mathcal{D_F(\varphi)}=\frac{2}{\phi(q)}\hspace{-.3 cm}\sum_{\substack{\chi \bmod q \\ \chi(-1)=-1}}
    \sumh_{f \in H_k(q,\chi)} 
    \hspace{-.1 cm}
       &\Bigg[ -\frac{1}{\log q}\sum_{n=1}^{\infty}
        \frac{\Lambda(n)\big(C_f(n)+C_{\bar f}(n)\big)}{\sqrt{n}}
        \, \hat{\varphi}\!\left(\frac{\log n}{\log q}\right) \nonumber \\
    & + \int_{-\infty}^{\infty}\varphi(x)\, dx
        + O_k\!\left(\frac{1}{\log q}\right) \Bigg]\label{Equation 4.1}
    .
\end{align}
By Equation (\ref{Equation bound on peterson type formula}), the contribution from the integral is
\begin{equation}\label{equation 4.2 a}
    \frac{2}{\phi(q)}
    \sum_{\substack{\chi \bmod q \\ \chi(-1)=-1}}
    \sumh_{f \in H_k(q,\chi)}
    \int_{-\infty}^{\infty}\varphi(x)\, dx
    \;= \int_{-\infty}^{\infty} \varphi(x)\ dx+O\left(\frac{1}{q^2}\right).
\end{equation}
We are left with the sum over prime powers in Equation ({\ref{Equation 4.1}}) which will contribute to the error term. It suffices to consider the term with $C_f(n)$, since the part with $C_{\overline{f}}(n)$ follows similarly.
Thus, our goal is to bound
\begin{equation*}
    \frac{2}{\phi(q)} \sum_{\substack{\chi \bmod q \\ \chi(-1) = -1}} 
    \sumh_{f \in H_k(q, \chi)} 
    \left[ - \frac{1}{\log q} \sum_{n=1}^{\infty} \Lambda(n) \frac{C_f(n)}{\sqrt{n}} 
    \hat{\varphi}\left( \frac{\log n}{\log q} \right) \right].
\end{equation*}
We split the sum over $n$ into prime powers and treat each case separately.
\begin{align}
    \sum_{n=1}^{\infty} \Lambda(n) \frac{C_f(n)}{\sqrt{n}} \hat{\varphi}\left( \frac{\log n}{\log q} \right) 
    &= \sum_{p} \frac{\log p \, C_f(p)}{\sqrt{p}} \hat{\varphi}\left( \frac{\log p}{\log q} \right) \nonumber + \sum_{p} \frac{\log p \, C_f(p^2)}{p} \hat{\varphi}\left( \frac{\log p^2}{\log q} \right) \nonumber \\
    &\quad + \sum_{p} \sum_{b \geq 3} \frac{\log p \, C_f(p^b)}{p^{\frac{b}{2}}} \hat{\varphi}\left( \frac{\log p^b}{\log q} \right).
\end{align}
We first deal with the contributions of terms with $n = p^b$ for $b \ge 3$.  
Since $|C_f(p^b)| \le 2$, we have
\begin{align*}\label{p^k term}
    \sum_{p} \sum_{b \ge 3} \frac{\log p \, C_f(p^b)}{p^{\frac{b}{2}}} \hat{\varphi}\left( \frac{\log p^b}{\log q} \right) 
    &\ll \sum_{\substack{p^b \ll q^{\delta} \\ b \ge 3}} \frac{2 \log p}{p^{\frac{b}{2}}} \left| \hat{\varphi}\left( \frac{\log p^b}{\log q} \right) \right|  \ll 1.
\end{align*}
We restricted $p\ll q^{\delta}$ assuming $\operatorname{supp}  \hat\varphi\subset (-\delta,\delta)$. Thus, the corresponding contribution is
\begin{equation}\label{equation 4.6 a}
    \frac{2}{\phi(q)} \sum_{\substack{\chi \bmod q \\ \chi(-1) = -1}} 
    \sumh_{f \in H_k(q, \chi)} 
    \left[ - \frac{1}{\log q} \sum_{p} \sum_{b \ge 3} \frac{\log p \, C_f(p^b)}{p^{\frac{b}{2}}} 
    \hat{\varphi}\left( \frac{\log p^b}{\log q} \right) \right] \ll \frac{1}{\log q}.
\end{equation}
Next, we consider the contribution of the terms with $n = p^2$.  Define
\begin{align}
    S_{sq} := \frac{2}{\phi(q)} \sum_{\substack{\chi \bmod q \\ \chi(-1) = -1}} 
    \sumh_{f \in H_k(q, \chi)} 
    \Bigg[ - \frac{1}{\log q} \sum_{p} \frac{\log p \, (\lambda_f(p^2) - \chi(p))}{p} 
    \hat{\varphi}\left( \frac{\log p^2}{\log q} \right) \Bigg].
\end{align}
We can restrict $p\ll q^{\delta/2}$ assuming $\operatorname{supp}  \hat\varphi\subset (-\delta,\delta)$. Now, using Equation (\ref{Equation bound on peterson type formula}) of Lemma \ref{Lemma Peterson type formula}, we bound $S_{sq}$ as
\begin{align*}\label{p^2 term}
    S_{sq} &\ll \frac{1}{\log q} \sum_{p \ll q^{\delta/2}} \frac{\log p}{p} \ \frac{p}{q^2} 
    + \frac{1}{\log q} \sum_{\substack{p \ll q^{\delta/2} \\ p \equiv 1 \, (q)}} \frac{\log p}{p}.
\end{align*}
By the Brun-Titchmarsh Theorem and the Prime Number Theorem, we get
\begin{equation}\label{Equation S_sq}
    S_{sq} \ll \frac{1}{\log q}, \hspace{1 cm} \text{provided $\delta < 4$. }
\end{equation}
Now we consider the contribution of the terms with $n = p$, which is
\begin{align}\label{equation 4.10 a}
    S_N &:= \frac{1}{\log q} \frac{2}{\phi(q)} \sum_{\substack{\chi \bmod q \\ \chi(-1) = -1}} 
    \sumh_{f \in H_k(q, \chi)} \sum_p a_p \lambda_f(p),
\end{align}
where
\begin{equation*}
    a_n = \frac{\Lambda(n)}{\sqrt{n}} \hat{\varphi}\left( \frac{\log n}{\log q} \right).
\end{equation*}
By Equation (\ref{Equation Peterson type formula}), we obtain that
\begin{equation}\label{Equation S_N}
    S_N = \frac{1}{\log q} \sum_p a_p \, \mathcal{K} \sum_{s,t} \frac{2\pi}{qst} 
    e\left( \frac{p+1}{qst} \right) V_{qs}(p,1;t) J_{k-1}\left( \frac{4\pi \sqrt{p}}{qst} \right) \\
    =: M_{off} + \epsilon_{off},
\end{equation}
where
\begin{equation*}
    V_{qs}(p,1;t) = \sum_{\substack{x \bmod t \\ (x(x+qs), t) = 1}} 
    e\left( \frac{p \bar{x} - \overline{x+qs}}{t} \right).
\end{equation*}
Here, $M_{off}$ is the contribution from terms with $(t,p) = 1$, and $\epsilon_{off}$ is the remaining part. The goal is to isolate terms with $(t,p) = 1$ and subsequently rewrite the congruence condition modulo $t$ in terms of Dirichlet characters. We will show that $\epsilon_{off}$ is small. Hence, the rest of the paper will focus on bounding the $M_{off}$ term.

\begin{lemma}
For $k \ge 3$ odd, we have
\begin{equation*}
    \epsilon_{off} = \frac{1}{\log q} \sum_p a_p \, \mathcal{K} \sum_{\substack{s,t \\ p \mid t}} \frac{2\pi}{qst} 
    e\left( \frac{p+1}{qst} \right) V_{qs}(p,1;t) J_{k-1}\left( \frac{4\pi \sqrt{p}}{qst} \right) \ll \frac{1}{q^k}.
\end{equation*}
\end{lemma}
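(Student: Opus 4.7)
The plan is to exploit the smallness of the Bessel function argument when $p\mid t$. Since the congruence $p\mid t$ forces $t\geq p$, and since the support condition $\operatorname{supp}\widehat\varphi\subset(-\delta,\delta)$ restricts us to $p\ll q^{\delta}$ with $\delta<8/3$ (in particular $p\ll q^{8/3}$, which is still much smaller than $q^{2}\cdot t^{2}$ for our range), the argument $4\pi\sqrt{p}/(qst)$ is uniformly small, certainly $o(1)$ as $q\to\infty$. First I would write $t=pu$ with $u\geq 1$, so that
\[
\frac{4\pi\sqrt{p}}{qst}=\frac{4\pi}{qs u\sqrt{p}}\ll \frac{1}{q}.
\]

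Next I would apply the small–argument bound from Lemma \ref{Lemma J-Bessel Function}: using $J_{k-1}(x)\ll x^{k-1}$ we get
\[
J_{k-1}\!\left(\frac{4\pi\sqrt{p}}{qst}\right)\ll \frac{p^{(k-1)/2}}{(qst)^{k-1}}.
\]
For the character sum $V_{qs}(p,1;t)$ the trivial bound $|V_{qs}(p,1;t)|\leq t$ is already sufficient — no Weil input is needed because the Bessel savings are powerful enough. Combining these inside a single summand gives
\[
\frac{2\pi}{qst}\,|V_{qs}(p,1;t)|\cdot\left|J_{k-1}\!\left(\tfrac{4\pi\sqrt p}{qst}\right)\right|
\ll \frac{p^{(k-1)/2}}{(qs)^{k}\,t^{k-1}}
=\frac{1}{(qs)^{k}\,p^{(k-1)/2}\,u^{k-1}},
\]
after substituting $t=pu$.

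I would then sum this bound over $s\geq 1$ and $u\geq 1$, both geometric-type series which converge for $k\geq 3$, yielding a per-prime contribution of size $1/(q^{k}p^{(k-1)/2})$. Inserting the coefficient $|a_p|\ll (\log p)/\sqrt p$ and summing over primes $p\ll q^{\delta}$ produces $\sum_p (\log p)/p^{k/2}\ll 1$ since $k/2\geq 3/2>1$. The operator $\mathcal K$ contributes at most an $O(1)$ factor, and the $1/\log q$ in front only helps. Putting everything together gives $\epsilon_{\text{off}}\ll q^{-k}$, as claimed.

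I do not expect any real obstacle here: the estimate is essentially forced by the congruence $p\mid t$ pushing $t$ to be large and the Bessel function to be deep in its Taylor regime. The only mildly delicate point is checking that $4\pi\sqrt p/(qst)$ is genuinely small so that the $x^{k-1}$ regime of $J_{k-1}$ applies uniformly; this is where the support restriction $\delta<8/3$ (which certainly gives $\sqrt p/q\ll q^{\delta/2-1}=o(1)$) is invoked, and it suffices for any fixed $k\geq 3$.
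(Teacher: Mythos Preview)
Your proof is correct and essentially identical to the paper's: both substitute $t=pu$, use the trivial bound $|V_{qs}(p,1;t)|\le t$, apply $J_{k-1}(x)\ll x^{k-1}$ in the small-argument regime, and sum the resulting convergent series over $s,u,p$. One minor remark: the support condition $\delta<8/3$ is not actually needed here, since after writing $t=pu$ the Bessel argument $4\pi/(qsu\sqrt p)$ is automatically $\ll 1/q$ regardless of the size of $p$; the paper's proof accordingly does not invoke it.
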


\begin{proof}
Since $p|t$, using the change of variable $t \to tp$, we get
\begin{equation*}
    \epsilon_{off} = \frac{1}{\log q} \sum_p \frac{a_p}{p} \, \mathcal{K} \sum_{s,t} \frac{2\pi}{qst} 
    e\left( \frac{p+1}{qstp} \right) V_{qs}(p,1;tp) J_{k-1}\left( \frac{4\pi}{qst \sqrt{p}} \right).
\end{equation*}
Note that $|V_{qs}(p,1;tp)| \ll tp$. Using Equation (\ref{bound of J bessel}) of Lemma \ref{Lemma J-Bessel Function} to bound the $J$-Bessel function, we have
\begin{equation*}
    J_{k-1}(x) \ll \min \{ x^{-\frac{1}{2}}, x^{k-1} \}.
\end{equation*}
Since $\frac{4\pi}{qst \sqrt{p}} \ll 1$ and $k\geq 3$ odd, we obtain
\begin{align}
    \epsilon_{off} &\ll \frac{1}{\log q} \sum_p \frac{\log p}{\sqrt{p}} \sum_s \frac{1}{qs} \sum_t \left( \frac{4\pi}{qst \sqrt{p}} \right)^{k-1} \\
    &\ll \sum_{p \ll q^\delta} \frac{\log p}{p^{k/2}} \sum_s \frac{1}{q^k s^k} \sum_t \frac{1}{t^{k-1}} \ll \frac{1}{q^k}. \nonumber
\end{align}
\end{proof}
Therefore,
\begin{equation}\label{eqnn 3.10}
    S_N=M_{off}+O\left(\frac{1}{q^k}\right),
\end{equation}
where
    \begin{align*}
        M_{off}&:=\frac{1}{\log q} \mathcal{K}\sum_{s,t} \frac{2\pi }{qst} \sum_{\substack{p \\ (p,t)=1 }} a_p  e\left(\frac{p+1}{qst}\right)V_{qs}(p,1;t)J_{k-1}\left(\frac{4\pi\sqrt p}{qst}\right)\\&= \frac{1}{\log q}\mathcal{K} \sum_{s,t} \frac{2\pi}{qst}\sumstar_{m\bmod t}V_{qs}(m,1;t)\sum_{\substack{p \\ p\equiv m(t) }}a_p e\left(\frac{p+1}{qst}\right)J_{k-1}\left(\frac{4\pi\sqrt p}{qst}\right).
        \end{align*}
Now we express the condition $p\equiv m(t)$ using Dirichlet characters. In particular,     
        \begin{align}
        M_{off}&=\frac{1}{\log q} \mathcal{K} \sum_{s,t} \frac{2\pi}{qst}\sumstar_{m\bmod  t}V_{qs}(m,1;t)\frac{1}{\phi(t)}\sum_{\chi(t)}\bar{\chi}(m)\sum_p \chi(p)a_p e\left(\frac{p+1}{qst}\right)J_{k-1}\left(\frac{4\pi\sqrt p}{qst}\right)\nonumber \\ &
        =: S_1+S_2,\label{Equation M_{off}}
    \end{align}
    where $S_1$ is the contribution from the trivial character, and $S_2$ is the rest.
We will show the bound for $S_1$(Lemma \ref{lemma 4.2 a}) in  Section \ref{Section 6} and the bound for $S_2$(Lemma \ref{Lemma 4.3}) in Section \ref{Section 7}.
\begin{lemma}\label{lemma 4.2 a}
    With the notations as above, for $k\geq 3$ odd, assuming GRH, 
\begin{align}\label{Lemma 4.2}
    S_1\ll \frac{1}{\log q}
\end{align}
for $\delta < 4$, with $\operatorname{supp}(\hat\varphi) \subset (-\delta,\delta)$.

    \end{lemma}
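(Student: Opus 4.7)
The plan is to start by simplifying the $m$-sum in $S_1$. Since the trivial character mod $t$ imposes only the coprimality condition $(m,t)=1$, I would evaluate $\sumstar_{m\bmod t} V_{qs}(m,1;t)$ directly. Writing $V_{qs}(m,1;t) = \sum^{*}_{x\bmod t,\,(x+qs,t)=1} e((m\bar{x} - \overline{x+qs})/t)$ and interchanging the sums, the inner Ramanujan sum $\sumstar_{m\bmod t} e(m\bar{x}/t) = \mu(t)$ (since $\bar{x}$ is a unit) forces $t$ to be squarefree and reduces the whole $m$-factor to $\mu(t)\sum^{*}_{y\bmod t,\,(y-qs,t)=1} e(-\bar{y}/t)$, which is bounded by $\mu^2(t)\cdot\gcd(qs,t)$ or an analogous tame quantity.

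Next, I would dyadically decompose $s\sim S$, $t\sim T$, $p\sim P \ll q^\delta$ and exploit the two regimes of the Bessel factor. For $qst \gg \sqrt{P}\,q^\eps$, equation (\ref{J-Bessel expansion 2}) gives $J_{k-1}(4\pi\sqrt{p}/qst) \ll (\sqrt{p}/qst)^{k-1}$; since $k\ge 3$, summing trivially leaves a negligible contribution. For $qst \ll \sqrt{P}/q^\eps$, the oscillatory expansion (\ref{J-Bessel expansion 1}) combined with $e((p+1)/qst)$ produces the phase $e((\sqrt{p}\pm 1)^2/qst)$, whose rapid variation in $p$ yields arbitrary saving via integration by parts (implemented through the Mellin device below). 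The heart of the matter is the transition range $qst \asymp \sqrt{P}/q$, where the essential model is
\begin{equation*}
    \sum_{st\asymp\sqrt{P}/q}\frac{1}{qst}\sum_p \frac{\log p}{\sqrt{p}}\,e\!\left(\tfrac{p}{qst}\right) w\!\left(\tfrac{p}{P}\right).
\end{equation*}

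For this transition sum I would Mellin-invert the smooth factor $e(p/qst)w(p/P) = (2\pi i)^{-1}\int_{(1)} \tilde{w}(z)\,p^{-z}\,dz$. With the stated bounds on $\tilde{w}$ — peak value $\sqrt{qst/P}$ at $|\Im z|\asymp P/qst$, rapid decay beyond that — the integral is effectively supported on $|\Im z| \le (P/qst)\log q$. The inner prime sum becomes $\sum_p (\log p)\,p^{-1/2-z}$, and under GRH Lemma \ref{Lemma 2.4} with the trivial character bounds it by $\log^2(q+|\Im z|)$ plus the $\sqrt{X}/(1+|\Im z|)^A$ PNT-type error. Shifting the contour from $\Re(z)=1$ to $\Re(z)=\eta>0$ crosses the simple pole of $\zeta(1/2+z)$ at $z=1/2$, whose residue contributes a main term of size $\ll \tilde{w}(1/2)$; the shifted-contour integral is controlled by the $\log^2$ bound integrated against the peak $\sqrt{qst/P}$. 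Summing over the dyadic range $st\asymp\sqrt{P}/q$ delivers $S_1 \ll (\log q)^c\,P^{1/4}/q$, which is $o(1)$ provided $\delta<4$, and dividing by the outer $\log q$ completes the bound.

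The main obstacle is the transition region, where neither trivial Bessel estimates nor crude prime-sum bounds suffice: the phases $e((p+1)/qst)$ and $e(\pm 2\sqrt{p}/qst)$ partially cancel and must be handled together with oscillation from the primes. The Mellin-transform mechanism cleanly decouples the non-arithmetic weight from the prime sum, after which GRH supplies the cancellation via Lemma \ref{Lemma 2.4}. The $\sqrt{X}/(1+|t|)^A$ term available for the trivial character is precisely what controls the residue at $z=1/2$, yielding the $P^{1/4}$ saving rather than a weaker $P^{1/2}$ bound; it is the PNT mechanism that ultimately drives the estimate.
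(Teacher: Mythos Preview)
Your strategy---Ramanujan-sum simplification of the $m$-sum, dyadic decomposition according to the size of $qst$, and the Mellin-inversion device to separate variables and invoke GRH---is the paper's approach. There is, however, a concrete error in your case split. The trivial Bessel bound $J_{k-1}(x)\ll x^{k-1}$ does \emph{not} suffice throughout the range $qst\gg\sqrt{P}\,q^\eps$: in the sub-range $\sqrt{P}\ll qst\ll P$ the trivial estimate produces
\[
\frac{\sqrt{P}}{q}\sum_{\sqrt{P}/q\,\ll\, u\,\ll\, P/q}\frac{d(u)}{u}\Bigl(\frac{\sqrt{P}}{qu}\Bigr)^{k-1}\ \asymp\ \frac{\sqrt{P}}{q}\,(\log q),
\]
which is $o(1)$ only for $\delta<2$. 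The point is that while the Bessel argument $4\pi\sqrt{p}/(qst)$ is indeed $\ll 1$ there, the additive phase $e(p/(qst))$ still has $p/(qst)\gg 1$ and genuinely oscillates; you must exploit this oscillation, not just the smallness of the Bessel.

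The fix requires nothing beyond what you already outlined: move the boundary of the trivially-bounded regime up to $qst\gg P$ (where your trivial argument does give $\ll 1/q$), and in the intermediate range $\sqrt{P}\ll qst\ll P$ apply your Mellin device with $\alpha=0$ (Taylor-expand the Bessel and keep only the linear phase $e(yP/(qst))$ in the transform). The stationary-phase bound on $\tilde w$ then yields the same $P^{1/4}/q$ as in your transition analysis. This is precisely the paper's three-case split $qst\gg P$, $\sqrt{P}\ll qst\ll P$, $qst\ll\sqrt{P}$ in Lemma~\ref{Lemma 6.2}. Two smaller remarks: in the oscillatory-Bessel range you do not obtain ``arbitrary saving'' but again the $P^{1/4}/q$ bound via the same Mellin mechanism; and the residue at $z=\tfrac12$ is negligible simply because $\tilde w(\tfrac12)$ lies far from the stationary peak at $|\Im z|\asymp P/(qst)$---it is the GRH $\log^2$ bound on the shifted integral, not the PNT term, that actually drives the estimate.
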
 
\begin{lemma}\label{Lemma 4.3}
With the notations as above, for $k\geq 3$ odd, assuming GRH, 
   \begin{equation}
    S_2\ll \frac{1}{\log q}
\end{equation}
for $\delta < \frac{8}{3}$, with $\operatorname{supp}(\hat\varphi) \subset (-\delta,\delta)$. 
\end{lemma}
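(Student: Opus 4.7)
The plan is to follow the route sketched in the outline: dyadically decompose the summation variables, reduce the prime sum to a Dirichlet series with a smooth weight, and bound the latter via Lemma \ref{Lemma 2.4} under GRH. First, I would write $\hat{\varphi}(\log p/\log q)$ as a dyadic sum of smooth cutoffs $\psi(p/P)$ with $P\ll q^{\delta}$, and simultaneously decompose the $s,t$ sum into dyadic boxes $s\sim S$, $t\sim T$. Depending on the size of $\sqrt{P}/(qST)$, the Bessel factor $J_{k-1}(4\pi\sqrt{p}/(qst))$ is treated either with its power series \eqref{J-Bessel expansion 2} (small argument, giving $(qST/\sqrt{P})^{k-1}$ decay) or its asymptotic expansion \eqref{J-Bessel expansion 1} (large argument, yielding a factor $(qst/\sqrt{p})^{1/2}$ and oscillatory phase $e(\pm 2\sqrt{p}/(qst))$). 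The transition range $ST\asymp\sqrt{P}/q$ is the decisive one for the $8/3$ threshold.

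Next I would apply Cauchy--Schwarz in the character variable $\chi\bmod t$ with $\chi\neq\chi_{0}$ to separate the arithmetic factor $\sumstar_{m}V_{qs}(m,1;t)\bar{\chi}(m)$ from the prime sum. By Parseval, the mean square of the arithmetic factor equals $\phi(t)\sumstar_{m}|V_{qs}(m,1;t)|^{2}$, which by standard Kloosterman-sum estimates applied to $V_{qs}$ is $O(\phi(t)\cdot t^{1+\varepsilon})$ in the generic case. The remaining task is then to bound
\[
\sum_{\chi\neq\chi_{0}\bmod t}\left|\sum_{p}\frac{\chi(p)\log p}{\sqrt{p}}\,e\!\left(\frac{p}{qst}\right)\psi\!\left(\frac{p}{P}\right)\right|^{2},
\]
with (if applicable) the Bessel oscillation absorbed into the exponential $e(p/(qst))$.

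For each individual $\chi$, I would apply Mellin inversion to the multiplier $e(p/(qst))\psi(p/P)/\sqrt{p}$, obtaining a kernel $\widetilde{w}(z)$ of magnitude $\sqrt{qst/P}$ on $|\Im z|\lesssim P/(qst)$ with rapid polynomial decay outside that band. Invoking Lemma \ref{Lemma 2.4} under GRH, the twisted Dirichlet series is $O(\log^{2}(q+|\Im z|))$ on a line just to the right of $\Re z=1/2$; after shifting the contour and integrating against $\widetilde{w}(z)$, each inner prime sum is $O(\sqrt{P/(qst)}\,\log^{O(1)}q)$ per character. Feeding this into Cauchy--Schwarz and summing over the dyadic boxes with $ST\asymp\sqrt{P}/q$ and $P\ll q^{\delta}$ produces the claimed estimate $S_{2}\ll P^{3/4}/q^{2}\cdot\log^{O(1)}q$, which is $o(1/\log q)$ precisely when $\delta<8/3$. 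The non-transition regimes yield strictly smaller contributions owing to the Bessel decay (small-argument case) or the rapid oscillation of the combined phase $e((p\pm 2\sqrt{p})/(qst))$ in the large-argument case.

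The main obstacle is achieving the sharp saturation at $\delta=8/3$: the $\sqrt{t}$ cost of Cauchy--Schwarz and Parseval on the character side must be exactly recovered by the $\sqrt{qst/P}$ height of the Mellin kernel, leaving essentially no $t^{\varepsilon}$ slack. Any loss in bounding $\sumstar_{m}|V_{qs}|^{2}$, or in the concentration of $\widetilde{w}$ on $|\Im z|\sim P/(qst)$, translates directly into a reduction of the admissible $\delta$. A secondary technical point is the oscillatory factor $e(\pm 2\sqrt{p}/(qst))$ coming from the large-argument Bessel regime: it must be combined with $e(p/(qst))$ before Mellin inversion, and one must verify that the combined kernel still enjoys the same localization so that Lemma \ref{Lemma 2.4} can be applied without degradation, despite the near-stationarity of the combined phase when the two exponentials nearly cancel.
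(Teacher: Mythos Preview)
Your approach is essentially the paper's: the same dyadic decomposition in $P,S,T$, the same trichotomy according to the size of $\sqrt{P}/(qST)$, the same use of Mellin inversion (the paper's Lemma~\ref{Lemma 6.3}) to separate variables, and the same appeal to Lemma~\ref{Lemma 2.4} under GRH. The one structural difference is that the paper applies Cauchy--Schwarz in the $x$-variable inside $V_{qs}(m,1;t)=\sum_{x}e(m\bar x/t)e(-\overline{x+qs}/t)$, whereas you apply it in $\chi$; these are dual and yield the same estimate.

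There is, however, one incorrect step: the claim that $\sumstar_{m\bmod t}|V_{qs}(m,1;t)|^{2}=O(t^{1+\varepsilon})$ via ``standard Kloosterman-sum estimates.'' Parseval in $m$ gives
\[
\sum_{m\bmod t}|V_{qs}(m,1;t)|^{2}
= t\cdot\#\{x\bmod t:(x(x+qs),t)=1\}\le t\phi(t),
\]
and this is essentially sharp: the Weil-type pointwise bound $|V_{qs}|\ll t^{1/2+\varepsilon}$ is consistent with an average square of size $t$, not $t^{\varepsilon}$, and no Kloosterman input improves on Parseval here. Your stated bound is therefore too optimistic by a factor of $t$. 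Fortunately this does not break the argument: with the correct bound $\sum_{\chi}|A_{\chi}|^{2}\le t\phi(t)^{2}$, your Cauchy--Schwarz produces exactly the $\sqrt{t}\,\phi(t)$ factor that the paper obtains, and the remainder of your outline goes through to give $D(P,S,T)\ll q^{\varepsilon}P^{3/4}/(q^{2}S)$ in the transition and large-argument ranges, hence $\delta<8/3$. (Your later remark about the ``$\sqrt{t}$ cost'' suggests you already had the right picture.) One further minor correction: the large-argument regime $ST\ll\sqrt{P}/q$ does not contribute strictly less --- the paper gets the same $P^{3/4}/(q^{2}S)$ there, the extra $(qst/\sqrt{p})^{1/2}$ from the Bessel asymptotic exactly compensating the larger Mellin height $\sqrt{P/(qST)}$.
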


\subsection{Proof of Theorem \ref{Main Thm}}\label{Sussection 3.2}

We provide a brief outline of the proof, highlighting the main steps and references to key lemmas and equations. By Equations (\ref{Equation 4.1}) to (\ref{equation 4.10 a}) and (\ref{eqnn 3.10}) we get
\begin{equation}
    \mathcal{D_F(\varphi)}= \int_{-\infty}^{\infty} \varphi(x)\ dx + M_{off} + O\left(\frac{1}{\log q}\right)+O\left(\frac{1}{q^k}\right).
\end{equation}
Now, using Equation (\ref{Equation M_{off}}) and Lemmas \ref{lemma 4.2 a} and \ref{Lemma 4.3} we get 
\begin{equation}
     \mathcal{D_F(\varphi)}=\int_{-\infty}^{\infty}\varphi(x)\,dx + O\left(\frac{1}{\log q}\right),
\end{equation}
with the support of $\hat\varphi$ in $(-\frac{8}{3},\frac{8}{3})$.  
    Hence, Theorem \ref{Main Thm} follows.

\section{PROOF OF LEMMA (\ref{lemma 4.2 a}) -- CONTRIBUTION FROM THE TRIVIAL CHARACTER}\label{Section 6}
Recall that \vspace{-.3 cm}
\begin{align*}
    S_1=\frac{1}{\log q}\,\mathcal{K} \sum_{s,t}\frac{2\pi}{qst}\sumstar_{m(t)}V_{qs}(m,1;t)\frac{1}{\phi(t)}\sum_{\substack{p \\ (p,t)=1 }} a_p e\left(\frac{p+1}{qst}\right)J_{k-1}\left(\frac{4\pi\sqrt p}{qst}\right).
\end{align*}
By using Ramanujan's sum, we get
\begin{align*}
    \sumstar_{m(t)}V_{qs}(m,1;t)
    &=\sum_{\substack{x(t) \\ (x(x+qs),t)=1 }} \sumstar_{m(t)} e\left(\frac{m\bar{x}}{t}\right) e\left(\frac{-\overline{x+qs}}{t}\right)
    =\mu(t) V_{qs}(0,1;t).
\end{align*}
Therefore, it suffices to consider
\begin{align*}
    S_1=\frac{1}{\log q}\sum_s \sum_{\substack{t }} \frac{2\pi \mu(t)}{qst \phi(t)} V_{qs}(0,1;t) \sum_{\substack{p \\ (p,t)=1 }} a_p \mathcal{K} e\left(\frac{p+1}{qst}\right) J_{k-1}\left(\frac{4\pi\sqrt p}{qst}\right).
\end{align*}
In the following lemma, we extend $S_1$ to include contributions from higher prime powers of the form
\[
a_{p^j} \, \mathcal{K} \, e\left(\frac{p^j+1}{qstd}\right) J_{k-1}\left(\frac{4\pi \sqrt{p^j}}{qstd}\right), \quad j\ge 2,
\] 
and show that their total contribution is negligible. This step is crucial for later rewriting the sum $\sum_{n\ge 1} \frac{\Lambda(n)\chi(n)}{n^s}$ as $-\frac{L'}{L}(s,\chi)$.
\begin{lemma}\label{Lemma E_{off}}
For $k \ge 3$ odd, we have
\begin{equation*}
\epsilon_{\text{higher power}} := \sum_s \sum_{t} \frac{2\pi \mu(t)}{qst \phi(t)} 
V_{qs}(0,1;t)
\sum_{\substack{p^j \\ j \ge 2\\ (p,t)=1}} a_{p^j} \mathcal{K} e\left(\frac{p^j+1}{qst}\right) 
J_{k-1}\left(\frac{4\pi \sqrt{p^j}}{qst}\right)=O\left(\frac{(\log q)^2}{q}\right).
\end{equation*}
\end{lemma}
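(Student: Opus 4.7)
The plan is to bound $\epsilon_{\text{higher power}}$ by taking absolute values throughout, controlling the character sum $V_{qs}(0,1;t)$ using its Ramanujan-sum structure, and splitting the $J$-Bessel function at its natural transition. Since $|\mathcal{K}(\cdot)|\leq 2|\cdot|$, $|e(\cdot)|=1$, and $\mu(t)$ restricts to squarefree $t$, together with $|a_{p^j}|\leq (\log p)/p^{j/2}$ and the truncation $p^j\leq q^{\delta}$ coming from the support of $\widehat{\varphi}$, I would reduce to bounding
\[
\sum_s \sum_{\substack{t\\ \mu(t)\neq 0}} \frac{|V_{qs}(0,1;t)|}{qst\,\phi(t)} \sum_{j\geq 2} \sum_{\substack{p\leq q^{\delta/j}\\ (p,t)=1}} \frac{\log p}{p^{j/2}}\,\left|J_{k-1}\!\left(\frac{4\pi p^{j/2}}{qst}\right)\right|.
\]

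Next I would evaluate $V_{qs}(0,1;t)$ for squarefree $t$. At a prime $\ell$ coprime to $qs$, the substitution $y=x+qs$ turns the sum into a complete Ramanujan sum minus a single term, giving $|V_{qs}(0,1;\ell)|\leq 2$; when $\ell\mid qs$ one gets a Ramanujan sum exactly, still bounded by $2$. By the Chinese Remainder Theorem the map $t\mapsto V_{qs}(0,1;t)$ is multiplicative on squarefree $t$, so $|V_{qs}(0,1;t)|\leq 2^{\omega(t)}\leq \tau(t)$.

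With this in hand I would split the $(s,t)$-summation at the Bessel transition $qst \sim p^{j/2}$ and apply Lemma \ref{Lemma J-Bessel Function}. In the small-argument regime $qst\geq p^{j/2}$, the bound $J_{k-1}(x)\ll x^{k-1}$ together with $k\geq 3$ produces the factor $(qst)^{-k}$, making the $s$- and $t$-sums absolutely convergent; what remains is a short prime-power sum with $p^j\leq q^{\delta}$ whose contribution is easily seen to be $O(q^{-1/3})$ for $\delta<8/3$. In the large-argument regime $qst<p^{j/2}$, the bound $J_{k-1}(x)\ll x^{-1/2}$ combined with the convergent sum $\sum_t \tau(t)/(\phi(t)\,t^{1/2})\ll 1$ and $\sum_{s\leq p^{j/2}/q} s^{-1/2}\ll (p^{j/2}/q)^{1/2}$ reduces the estimate to
\[
\frac{1}{q}\sum_{j\geq 2}\sum_{p\leq q^{\delta/j}}\frac{\log p}{p^{j/2}}\ll \frac{\log q}{q},
\]
where the $j=2$ term dominates.

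Adding the two regions gives $\epsilon_{\text{higher power}}\ll (\log q)/q$, comfortably within the claimed bound $O((\log q)^2/q)$. The main technical step is the $t$-sum in the large-argument regime: the trivial bound $|V_{qs}(0,1;t)|\leq t$ would produce extraneous factors from $\sum_t t^{1/2}/\phi(t)$ and an additional $\log q$ from the $s$-sum, so the multiplicative refinement $|V_{qs}(0,1;t)|\ll\tau(t)$ derived in the second step is what cleanly closes the argument.
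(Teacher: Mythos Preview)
Your multiplicative bound $|V_{qs}(0,1;t)|\le 2^{\omega(t)}$ for squarefree $t$ is correct and sharper than what the paper uses, and your treatment of the large-argument regime is fine. However, there is a gap in the small-argument regime. After dropping the constraint $qst\ge p^{j/2}$ and summing the $s,t$-series over all $s,t\ge 1$, what remains is of order
\[
\frac{1}{q^{k}}\sum_{j\ge 2}\ \sum_{p^j\le q^{\delta}} (\log p)\, p^{\,j(k-2)/2}.
\]
For $k=3$ the $j=2$ term gives $q^{\delta-3}=O(q^{-1/3})$ as you claim, but for $k\ge 5$ the $j=2$ term contributes $\asymp q^{\,\delta(k-1)/2-k}$, which for $\delta$ near $8/3$ is $q^{1/3}$ when $k=5$ and grows further with $k$. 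So as written your argument only proves the lemma for $k=3$. The fix is easy: retain the restriction $st\ge p^{j/2}/q$; then $\sum_{st\ge M}(st)^{-k}\tau(t)/\phi(t)\ll M^{1-k}$, which exactly cancels the factor $p^{j(k-1)/2}/q^{k-1}$ and leaves $q^{-1}(\log p)\,p^{-j/2}$, matching your large-argument estimate and yielding $(\log q)/q$ uniformly in $k$.

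The paper sidesteps this issue by using the cruder bound $|V_{qs}(0,1;t)|\le \phi(t)$. This cancels the $\phi(t)$ in the denominator and reduces both regimes to $\sum_{s,t}(st)^{-1}\min\{(p^{j/2}/qst)^{-1/2},(p^{j/2}/qst)^{k-1}\}$; a partial-summation/divisor argument then shows each piece is $\ll\log q$, independently of $k$. The paper's route loses an extra $\log q$ compared to your (corrected) argument, but it is more robust precisely because it never has to track how the $st$-constraint interacts with the exponent $k$.
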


\begin{proof}
Using Equation (\ref{bound of J bessel}) and the trivial bound $|V_{qs}(0,1;t)| \le \phi(t)$, we get
\begin{align*}
\epsilon_{\text{higher power}} 
&\ll \frac{1}{q} \sum_{\substack{j \ge 2 \\ j \ll \log q}} \sum_{p \ll q^{\delta/j}} \frac{\log p}{p^{\frac{j}{2}}} 
\sum_{t} \frac{1}{t} \sum_s \frac{1}{s} 
\min \Biggl\{ \left(\frac{\sqrt{p^j}}{qst}\right)^{-1/2}, \left(\frac{\sqrt{p^j}}{qst}\right)^{k-1} \Biggr\} \\
&\ll \frac{1}{q} \sum_{\substack{j \ge 2 \\ j \ll \log q}} \sum_{p \ll q^{\delta/j}} \frac{\log p}{p^{\frac{j}{2}}} 
\Biggl[ \sum_{\substack{s,t \\ st < \frac{\sqrt{p^j}}{q}}} \frac{1}{st} \left(\frac{\sqrt{p^j}}{qst}\right)^{-1/2} 
+ \sum_{\substack{s,t \\ st \ge \frac{\sqrt{p^j}}{q}}} \frac{1}{st} \left(\frac{\sqrt{p^j}}{qst}\right)^{k-1} \Biggr].
\end{align*}
By partial summation, the first sum is bounded by
\begin{equation*}
\left(\frac{\sqrt{p^j}}{q}\right)^{-1/2} \sum_{\substack{s,t \\ st < \frac{\sqrt{p^j}}{q}}} \frac{1}{(st)^{1/2}} 
\ll \left(\frac{\sqrt{p^j}}{q}\right)^{-1/2}  \sum_{u \ll \frac{\sqrt{p^j}}{q}} \frac{d(u)}{\sqrt{u}} \ll \log q,
\end{equation*} 
and a similar calculation gives 
    \begin{equation*}
        \sum_{\substack{s,t \\ st \ge \frac{\sqrt{p^j}}{q}}}\frac{1}{st}\left(\frac{\sqrt {p^j}}{qst}\right)^{{k-1}}\ll \log q.
    \end{equation*}
Hence,
\begin{equation*}
\epsilon_{\text{higher power}} \ll \frac{\log q}{q} \sum_{\substack{j \ge 2 \\ j \ll \log q}} \sum_{p \ll q^{\delta/j}} \frac{\log p}{p^{\frac{j}{2}}}.
\end{equation*}
Using the Prime Number Theorem and partial summation, we conclude
\begin{equation*}
\epsilon_{\text{higher power}} \ll \frac{(\log q)^2}{q}.
\end{equation*}
\end{proof}
Therefore, we obtain
\begin{equation*}
S_1 = \frac{1}{\log q} \sum_s \sum_t \frac{2\pi \mu(t)}{qst \phi(t)} V_{qs}(0,1;t) 
\sum_{\substack{r \\ (r,t)=1}} a_r \mathcal{K} e\left(\frac{r+1}{qst}\right) 
J_{k-1}\left(\frac{4\pi \sqrt r}{qst}\right) + O\left(\frac{\log q}{q}\right).
\end{equation*}
Now, by Lemma~\ref{Lemma E_{off}}, we can extend the prime sum in $S_1$ to all positive integers $r$ co-prime to $t$.\\

To handle the oscillatory behavior of the $J$-Bessel function and the exponential term, we carry out a dyadic decomposition.  
Fix a nice smooth function $W \in C_c^\infty([\frac{3}{4},2])$ with $W(x)=1$ on $[1,\frac{3}{2}]$ and $W(x)+W(\frac{x}
{2})=1$ for all $x\in [1,3]$.  Then we have 
\begin{equation}\label{4.111}
  \sumd_{R} W\!\left(\frac{x}{R}\right) = 1
  \qquad \text{for all } x\ge1.
\end{equation} 
For details, see Lemma 2.2 of \cite{KMSS}. Therefore
\[
   \sum_{r \geq 1} a_r \;=\; \sum_{r\geq 1} a_r \sumd_{R} \;W\left(\frac{r}{R}\right)\;=\; \sumd_{R}\sum_{r\sim R}a_r W\left(\frac{r}{R}\right),
\]
where $r\sim R$ means $R\leq r\leq 2R$. We now decompose $S_1$ into three ranges depending on the transition behavior of the $J$-Bessel function and the exponential term:
\begin{equation}
   S_1 \;=\; S_{1,\mathrm{case}_1} \;+\; S_{1,\mathrm{case}_2} \;+\; S_{1,\mathrm{case}_3},
\end{equation}
where $ S_{1,\mathrm{case}_1}$  is restricted to region $\frac{R}{q}\ll st$, $S_{1,\mathrm{case}_2}$ is restricted to region $\frac{\sqrt R}{q}\ll st\ll \frac{R}{q}$ and lastly $S_{1,\mathrm{case}_3}$ is restricted to $\frac{\sqrt R}{q}\gg st$. Since $\operatorname{supp} \hat{\varphi} \subset (-\delta,\delta)$, 
we can assume $R \ll q^{\delta}$ in all the three cases. The following lemma immediately implies Equation (\ref{Lemma 4.2}).
\begin{lemma}
   \label{Lemma 6.2}
   Assume GRH. Suppose that $\operatorname{supp} \hat{\varphi} \subset (-\delta,\delta)$ with $\delta < 4$. 
   Then, with the above notation, for $k\geq 3$ odd, we have the following estimates: \begin{align}
      S_{1,\mathrm{case}_1} &\ll \frac{1}{q}, \label{Equation 6.2}\\
      S_{1,\mathrm{case}_2} &\ll \frac{1}{\log q}, \label{Equation 6.3}\\
      S_{1,\mathrm{case}_3} &\ll \frac{1}{\log q}. \label{Equation 6.4}
   \end{align}
\end{lemma}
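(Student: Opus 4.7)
I will handle the three cases according to the size of the Bessel argument $\tfrac{4\pi\sqrt r}{qst}$ relative to $1$. Case~1 is a direct trivial estimate that exploits the smallness of the Bessel function; Cases~2 and 3 implement the Mellin-transform-plus-GRH strategy described in Section~1.3. In \textbf{Case~1} ($st\gg R/q$) the Bessel argument is $O(R^{-1/2})$, so Lemma~\ref{Lemma J-Bessel Function} gives $J_{k-1}\!\left(\tfrac{4\pi\sqrt r}{qst}\right)\ll(\sqrt r/(qst))^{k-1}$. Combining this with $|V_{qs}(0,1;t)|\leq \phi(t)$ and $|a_r|\ll\log r/\sqrt r$ yields
\begin{equation*}
|S_{1,\mathrm{case}_1}|\ll \frac{1}{\log q}\sum_{R\ll q^\delta}\log R\cdot R^{k/2}\sum_{\substack{s,t\\ st\gg R/q}}\frac{1}{(qst)^{k}}.
\end{equation*}
The divisor estimate $\sum_{st>R/q}(st)^{-k}\ll (q/R)^{k-1}\log R$ together with convergence of the dyadic sum over $R$ (which uses $k\geq 3$) gives $|S_{1,\mathrm{case}_1}|\ll 1/q$.

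For \textbf{Cases~2 and 3}, since prime powers of order $\geq 2$ are already controlled by Lemma~\ref{Lemma E_{off}}, I reduce to a smoothly weighted prime sum. In Case~2 I bound the Bessel factor via its Taylor expansion \eqref{J-Bessel expansion 2}, retaining the leading term (higher-order terms are smaller by a factor $R/(qst)^2\ll 1$ since $qst\gg \sqrt R$), and treat $e(r/(qst))$ as the dominant oscillation. In Case~3 I substitute the oscillatory asymptotic \eqref{J-Bessel expansion 1}, which exposes an additional phase $e(\pm 2\sqrt r/(qst))$ with smooth amplitude $w_k$ and an extra factor $\sqrt{qst/\sqrt r}$ from the $1/\sqrt x$ prefactor. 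In either case the inner sum takes the form $\sum_p \frac{\log p}{\sqrt p}\hat\varphi(\log p/\log q)g(p)$, where $g$ is a smooth oscillatory weight supported on $p\sim R$. Writing $g(p)=\tfrac{1}{2\pi i}\int_{(c)}\tilde g(z)\,p^{-z}\,dz$ and performing stationary phase / repeated integration by parts, one obtains $|\tilde g(z)|\ll \sqrt{qst/R}$ in a transition window of size $\sim R/(qst)$ around the stationary point, with rapid polynomial decay outside. Under GRH I shift the $z$-contour to $\Re z=\varepsilon>0$ and apply Lemma~\ref{Lemma 2.4} for the trivial character to bound the Dirichlet-type prime sum by $\log^2(q+|\Im z|)+\sqrt R(1+|\Im z|)^{-A}$. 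Substituting and integrating in $z$, and then summing over $s,t$ in the appropriate range, reproduces the outline's estimate $\ll R^{1/4}(\log q)^{O(1)}/q$. Summing over dyadic $R\ll q^\delta$ yields $S_{1,\mathrm{case}_2},S_{1,\mathrm{case}_3}\ll 1/\log q$ whenever $\delta<4$.

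The main technical obstacle is the Case~3 analysis. The combined phase of $e(r/qst)$ and $e(\pm 2\sqrt r/qst)$ has its unique stationary point in $r$ at $r=O(1)$, which lies well outside the dyadic window $r\sim R$; one must therefore verify carefully that on the support of $W$ the derivative of the combined phase is $\gtrsim 1/(qst)$, so that repeated integration by parts in $r$ yields the claimed Mellin-side decay of $\tilde g$. A related subtlety is the accounting of the various logarithmic factors accumulated from Lemma~\ref{Lemma 2.4} and from the dyadic and $s,t$ sums; ensuring a genuine $1/\log q$ saving (rather than merely $o(1)$) is what ultimately forces the restriction $\delta<4$.
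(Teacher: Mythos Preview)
Your proposal is correct and follows essentially the same route as the paper. Case~1 is identical. In Cases~2 and~3 the paper packages your Mellin/stationary-phase step into its Lemma~6.3 (which treats the combined phase $e(yR/(qst))e(\pm\alpha\sqrt{yR}/(qst))$ directly, so your ``technical obstacle'' about the stationary point at $r=O(1)$ is exactly what that lemma handles), and then writes the prime sum as $-\tfrac{L'}{L}(\tfrac12+z,\chi_0)$, shifts to $\Re z=1/\log q$, and separates the pole at $z=\tfrac12$ from the line integral; your use of Lemma~\ref{Lemma 2.4} for the trivial character is the equivalent device, with the $\sqrt R(1+|\Im z|)^{-A}$ term playing the role of the pole contribution. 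The only cosmetic difference is that the paper retains the full Bessel Taylor series in Case~2 (summing over $l$) rather than truncating to the leading term, but both yield the same $R^{1/4}q^{\varepsilon}/q$ bound and hence the constraint $\delta<4$.
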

\msubsection{Proof of Equation (\ref{Equation 6.2}) — Bound for}{S_{1,\mathrm{case}_1}}
\begin{proof}

Explicitly,
\begin{equation*}
     S_{1,\mathrm{case}_1}
   := \frac{1}{\log q}\,\mathcal{K}
   \sumd_{R \ll q^\delta}\;\sum_{\substack{s,t \\ \frac{R}{q}\ll st}}\;
   \frac{2\pi \mu(t)}{qst \phi(t)} V_{qs}(0,1;t)
   \sum_{\substack{r \\ (r,t)=1}} a_r \, W\!\left(\tfrac{r}{R}\right)
   e\!\left(\tfrac{r+1}{qst}\right)
   J_{k-1}\!\left(\tfrac{4\pi \sqrt{r}}{qst}\right).
\end{equation*}
To bound this expression, we use the prime number theorem together with the bounds for the $J$-Bessel function in Equation (\ref{bound of J bessel}). Also note that $|V_{qs}(0,1;t)| \leq \phi(t)$.
Hence, for $k\geq 3$ odd and $\delta<4$, 
\begin{align*}
    S_{1,\mathrm{case}_1} 
    &\ll \frac{1}{\log q}
       \sumd_{R\ll q^\delta}
       \sum_{\substack{s,t \\ \frac{R}{q} \ll st }}
       \frac{1}{qst} \cdot \frac{\phi(t)}{\phi(t)} 
       \cdot \sqrt R
       \left(\frac{\sqrt{R}}{qst}\right)^{k-1} \\
    &\ll \frac{1}{\log q}
       \sumd_{R\ll q^\delta}
       \frac{1}{q^k} \, R^{\tfrac{k}{2}}
       \sum_{\substack{s,t \\ \frac{R}{q} \ll st }}
       \frac{1}{(st)^k}\ll \sumd_{R\ll q^\delta} \frac{R^{1-\tfrac{k}{2}}}{q} \ll \frac{1}{q}.
\end{align*}
\end{proof}
\msubsection{Proof of Equation (\ref{Equation 6.3}) — Bound for}{S_{1,\mathrm{case}_2}}

Explicitly, we have
\begin{align*}
   S_{1,\mathrm{case}_2}
   := \frac{1}{\log q}\,\mathcal{K}\!
   \sumd_{R \ll q^\delta}\; \sum_{\substack{s,t \\ \tfrac{\sqrt{R}}{q} \ll st \ll \tfrac{R}{q}}}
   \frac{2\pi \mu(t)}{qst \phi(t)} V_{qs}(0,1;t)
   \sum_{\substack{r \\ (r,t)=1}} a_r \, W\!\left(\tfrac{r}{R}\right)
   e\!\left(\tfrac{r+1}{qst}\right)
   J_{k-1}\!\left(\tfrac{4\pi \sqrt{r}}{qst}\right).
\end{align*}
The key tool bounding $S_{1,\mathrm{case}_2}$ and $S_{1,\mathrm{case}_3}$ will be a Mellin transform representation of the exponential function, which allows us to separate the variables. 

\begin{lemma}\label{Lemma 6.3}
Let $\eta(x)$ be a smooth weight function such that
\[
\eta(x) =
\begin{cases}
1 & \text{if } x \in [a,b], \\[4pt]
0 & \text{if } x \notin (\tfrac{a}{2},2b),
\end{cases}
\]
where $a$ and $b$ are fixed positive constants. Define
\[
   \mathcal{M}_1(s) = \int_{0}^{\infty} \eta(y)\,
   e\!\left(\tfrac{yX}{j}\right)
   e\!\left(\pm \tfrac{\alpha \sqrt{yX}}{j}\right)
   y^{s-1}\, dy,
\]
where $\alpha\geq 0$ and $A, j,X\geq 1$ be fixed constants. For $X$ large enough we get 
\[
   \eta(y)\,
   e\!\left(\tfrac{yX}{j}\right)
   e\!\left(\pm \tfrac{\alpha\sqrt{yX}}{j}\right)
   = \frac{1}{2\pi i}\int_{(c)} \mathcal{M}_1(s)\,y^{-s}\,ds.
\]
Moreover, there exist constants $c_1,d_1$ such that
\[
   \mathcal{M}_1(c+iv) \ll
   \begin{cases}
      \sqrt{\tfrac{j}{X}}, & \text{if } \tfrac{d_1X}{j}\leq |v|\leq \tfrac{c_1X}{j}, \\[6pt]
      \dfrac{1}{\left(|v|+\tfrac{X}{j}\right)^A}, & \text{otherwise},
   \end{cases}
\]
where the implied constant depends on $A$ and $c$.
\end{lemma}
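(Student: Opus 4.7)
The plan is to split the argument into two tasks: justifying the Mellin inversion identity, and establishing the two bounds on $\mathcal{M}_1(c+iv)$ by combining integration by parts with a stationary phase analysis.

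Since $\eta$ is smooth and compactly supported in $(0,\infty)$, the integrand defining $\mathcal{M}_1(s)$ is smooth and supported on $[a/2, 2b]$, so $\mathcal{M}_1(s)$ is entire in $s$. Once the rapid decay of $|\mathcal{M}_1(c+iv)|$ in $|v|$ is verified below, the standard Mellin inversion formula applies on any vertical line $\operatorname{Re}(s)=c$ and recovers the original function. For the bounds, write
\[
\mathcal{M}_1(c+iv) = \int_0^\infty g(y)\, e^{i\phi(y)}\, dy, \qquad g(y)=\eta(y)\,y^{c-1},
\]
with phase
\[
\phi(y) = \frac{2\pi X}{j}\,y \pm \frac{2\pi \alpha \sqrt{X}}{j}\,\sqrt{y} + v \log y.
\]
A direct computation gives
\[
\phi'(y) = \frac{2\pi X}{j} \pm \frac{\pi \alpha \sqrt{X}}{j\sqrt{y}} + \frac{v}{y}, \qquad \phi''(y) = -\frac{v}{y^2} \mp \frac{\pi \alpha \sqrt{X}}{2 j\, y^{3/2}}.
\]
Because $y$ ranges over the compact set $[a/2, 2b]$ and $X$ is large, the leading contribution to $\phi'$ comes from $2\pi X/j + v/y$, while the $\alpha$-term is a lower-order perturbation.

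Choose constants $0 < d_1 < c_1$ so that, uniformly for $y \in [a/2, 2b]$, we have $|\phi'(y)| \gtrsim X/j$ whenever $|v| \le d_1 X/j$ and $|\phi'(y)| \gtrsim |v|$ whenever $|v| \ge c_1 X/j$. In both regimes $|\phi'(y)| \gtrsim |v|+X/j$, and $A$-fold integration by parts using the operator $f \mapsto (f / i\phi')'$ yields
\[
|\mathcal{M}_1(c+iv)| \ll_A \frac{1}{(|v|+X/j)^{A}},
\]
the bookkeeping being routine since $g$, $1/\phi'$, and their derivatives are uniformly bounded on the support of $g$.

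In the complementary transition range $d_1 X/j \le |v| \le c_1 X/j$, the equation $\phi'(y)=0$ has a unique solution $y_0 \in [a/2, 2b]$ (for $X$ large enough that the $\alpha$-correction is small), and at that point $|\phi''(y_0)| \asymp |v|/y_0^2 \asymp X/j$. The standard second-derivative stationary phase estimate (van der Corput's test, or Lemma 5.5.6 of Huxley) then gives
\[
|\mathcal{M}_1(c+iv)| \ll |\phi''(y_0)|^{-1/2} \ll \sqrt{j/X},
\]
which is the desired bound. The main technical point I anticipate is ensuring that the $\pm \pi \alpha\sqrt{X}/(j\sqrt{y})$ term neither shifts the stationary point outside $[a/2,2b]$ nor destroys the sign control on $\phi'$ used in the two integration by parts regimes; this reduces to the quantitative comparison $\alpha\sqrt{X}/j \ll X/j$, i.e.\ $X$ large relative to $\alpha^2$, with any residual effect absorbed into the choice of $c_1, d_1$.
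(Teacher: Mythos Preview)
Your proposal is correct and follows essentially the same approach as the paper: write the integrand as an amplitude times $e^{i\phi}$, observe that the dominant balance in $\phi'$ is between $2\pi X/j$ and $v/y$, use repeated integration by parts when $|v|$ lies outside a window of size $\asymp X/j$ so that $|\phi'|\gtrsim |v|+X/j$, and apply stationary phase with $|\phi''(y_0)|\asymp X/j$ inside that window. The paper's proof is terser but structurally identical; your added remarks on Mellin inversion and on the $\alpha$-term being a lower-order perturbation (requiring $X$ large relative to $\alpha^2$) are exactly the points the paper leaves implicit.
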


\begin{proof}
We write
\begin{align*}
\mathcal{M}_1(c+iv)&=\int_{0}^{\infty}\eta(y)e\left(\frac{yX}{j}\pm\frac{\alpha\sqrt {yX}}{j}\right)y^{c-1+iv}\ dy\\
    &=\int_{0}^{\infty} \eta(y)y^{c-1}e\left(\frac{yX}{j}\pm\frac{\alpha\sqrt {yX}}{j}+\frac{v}{2\pi}\log y\right)\ dy.
\end{align*}
Let $$g(y)=\frac{yX}{j}\pm\frac{\alpha\sqrt {yX}}{j}+\frac{v}{2\pi}\ \log y.$$
Therefore, we have
\begin{equation*}
    g'(y)= \frac{X}{j}\pm\frac{\alpha\sqrt {X}}{2j\sqrt y}+\frac{v}{2\pi y}.
\end{equation*}
\medskip

If $|v|\geq \frac{c_1X}{j}$ for some $c_1$, the $\frac{v}{2\pi y}$ term dominates and we do not get any stationary point. Therefore, using bounds for the exponential integral we get $\mathcal{M}_1(c+iv)\ll  1 /\left(|v|+\tfrac{X}{j}\right)^A$. We can show that there is no stationary point for $|v|\leq \frac{d_1X}{j}$ for some $d_1$ and by similar argument we get the same bound in this case as well.\\

If $\tfrac{d_1X}{j}\leq |v|\leq \tfrac{c_1X}{j}$, we need to analyze the saddle point at $y_0$. For sufficiently large $X$, 
we get $|g''(y_0)|\asymp \frac{X}{j}$. Hence, the corresponding bound is $\sqrt{\frac{j}{X}}$. 
\end{proof}
By Equation (\ref{J-Bessel expansion 2}), we have
\begin{equation*}
    J_{k-1}\!\left(\frac{4\pi \sqrt{r}}{qst}\right)
    = \sum_{l=0}^{\infty}
      \frac{(-1)^l}{l!\,(l+k-1)!}
      \left(\frac{2\pi \sqrt{r}}{qst}\right)^{2l+k-1}.
\end{equation*}
Define
\begin{equation*}
    \eta_l(y) \;=\; 
    \hat{\varphi}\!\left(\frac{\log (yR)}{\log q}\right)
    W(y)\, y^{\,l+\tfrac{k}{2}-\tfrac{1}{2}} .
\end{equation*}
Applying Lemma~\ref{Lemma 6.3} with $\alpha=0$, we obtain
\begin{equation*}
    \eta_l\!\left(\tfrac{r}{R}\right)\,
    e\!\left(\tfrac{r}{R}\cdot \tfrac{R}{qst}\right)
    = \frac{1}{2\pi i}
      \int_{(1)} \mathcal{M}_{1,l}(z)\,
      \left(\tfrac{R}{r}\right)^z \, dz,
\end{equation*}
where
\begin{equation*}
    \mathcal{M}_{1,l}(z)
    = \int_{0}^{\infty} \eta_l(y)\,
      e\!\left(\tfrac{yR}{qst}\right)\,
      y^{z-1}\, dy.
\end{equation*}
Thus,
\begin{align*}
   S_{1,\mathrm{case}_2}
   \ll \frac{1}{\log q}
       \sumd_{R \ll q^\delta}
       \sum_s
       \sum_{\substack{t \\ \tfrac{\sqrt{R}}{q} \ll st \ll \tfrac{R}{q}}}
       &\Biggl| 
         \frac{2\pi \mu(t)}{qst\,\phi(t)} \,
         V_{qs}(0,1;t) \,
         e\!\left(\tfrac{1}{qst}\right)
         \sum_{l=0}^{\infty}
         \frac{1}{l!\,(l+k-1)!}
         \left(\frac{2\pi \sqrt{R}}{qst}\right)^{2l+k-1} \\
   &\times
         \frac{1}{2\pi i}
         \int_{(1)} 
            \sum_{\substack{r \\ (r,t)=1}}
            \frac{\Lambda(r)}{r^{\tfrac{1}{2}+z}}
            \,\mathcal{M}_{1,l}(z)\, R^z \, dz 
       \Biggr|.
\end{align*}
We can write the sum over $r$ as 
\begin{equation*}
    \frac{1}{2\pi i}\int_{(1)}-\frac{L'}{L}\left(\frac{1}{2}+z,\chi_0\right)\mathcal{M}_{1,l}(z)R^z\ dz,
\end{equation*} 
where $\chi_0 $ is the trivial character modulo $ t$. Assuming GRH we can shift the contour to $\mathrm{}{\Re}(z)=\frac{1}{\log q}$ and pick up the pole at $z=\frac{1}{2}$. In particular,

    \begin{equation}\label{Equation 5.5z}
        \frac{1}{2\pi i}\int_{(1)}-\frac{L'}{L}\left(\frac{1}{2}+z,\chi_0\right)\mathcal{M}_{1,l}(z)R^z\ dz=\mathcal{M}_{1,l}\left(\frac{1}{2}\right)R^{\frac{1}{2}}+\int_{(\frac{1}{\log q})}\frac{L'}{L}\left(\frac{1}{2}+z,\chi_0\right)\mathcal{M}_{1,l}(z)R^z\ dz.
    \end{equation}
Let the contribution of the pole be $S_1, \mathrm{case_2}, Prin$ and the contribution of the integral be $S_1, \mathrm{case_2}, int$.\\
Now using Lemma \ref{Lemma 6.3} with $A=1$ and recalling that $k\geq 3$ odd, we get
\begin{align*}
   S_{1,\mathrm{case_2},Prin} &\ll \frac{1}{\log q}\sumd_{R\ll q^\delta} \sum_s \sum_{\substack{t \\  \frac{\sqrt{R}}{q}\ll st\ll \frac{R}{q}}}\frac{1}{qst}\sum_{l=0}^{\infty}\frac{1}{l!(l+k-1)!}\left(\frac{2\pi\sqrt{R}}{qst}\right)^{2l+k-1}
    \left(\frac{qst}{R}\right)R^{\frac{1}{2}}\\
    &=\frac{1}{\log q}\sumd_{R\ll q^\delta}\sum_s \sum_{\substack{t \\  \frac{\sqrt{R}}{q}\ll st\ll \frac{R}{q}}}\sum_{l=0}^{\infty}\frac{1}{l!(l+k-1)!}\ \frac{(2\pi\sqrt{R})^{2l+k-2}}{(qst)^{2l+k-1}} \\ &
    \ll \frac{1}{\log q}\sumd_{R\ll q^\delta}\frac{(\sqrt{R})^{k-2}}{q^{k-1}} \hspace{-.5 cm}\sum_{\substack{u\\ \frac{\sqrt R}{q} \ll u \ll\frac{R}{q} }}\frac{d(u)}{u^{k-1}}\ll  \sumd_{R\ll q^\delta}\frac{1}{q}\ll \frac{\log q}{ q}.
    \end{align*}
We now consider the contribution from $S_1, \mathrm{case_2}, int$. By applying the explicit formula (see \cite[Section~5.5]{IK}) and using that under GRH all non-trivial zeros lie on the critical line, and there are
$O(\log(q|t|))$ zeros up to height $|t|$ (see \cite[Sections~5.7--5.8]{IK}), we obtain that
\begin{equation}
    \frac{L'}{L}\left(\frac{1}{2}+\sigma+it,\chi\right)\ll \log^2(q+|t|).\label{eqn 6.6}
\end{equation}
Using Equation (\ref{eqn 6.6}) and Lemma \ref{Lemma 6.3}, we have
\begin{align}
    \int_{(\frac{1}{\log q})}\frac{L'}{L}\left(\frac{1}{2}+z,\chi_0\right)\mathcal{M}_{1,l}(z)R^z\ dz \nonumber &\ll
    R^{\frac{1}{\log q}}\ (\log q)^2 \int_{0}^{c_1\frac{R}{qst}}\sqrt{\frac{qst}{R}}\ dt \ +R^{\frac{1}{\log q}}\int_{c_1\frac{R}{qst}}^{\infty}\frac{\log^2 (t+q)}{t^2}\ dt\\
    &\ll \sqrt{\frac{R}{qst}} \log^2 q+\left(\frac{qst}{R}\right)\log^2 q \ll \sqrt{\frac{R}{qst}}\log^2 q,\label{equation 6.7}
\end{align}
since $st\ll \frac{R}{q}$.
Therefore, we obtain
\begin{align*}
    S_{1, \mathrm{case_2}, int}& \ll \frac{1}{\log q}\sumd_{R\ll q^\delta}\sum_s \sum_{\substack{t \\  \frac{\sqrt{R}}{q}\ll st\ll \frac{R}{q}}} \frac{1}{qst}\sum_{l=0}^{\infty}\frac{1}{l!(l+k-1)!}\left(\frac{2\pi\sqrt{R}}{qst}\right)^{2l+k-1} \log^2 q \left(\sqrt{\frac{R}{qst}}\right)\\
    & \ll \sumd_{R\ll q^\delta}\log q \sum_s \sum_{\substack{t \\  \frac{\sqrt{R}}{q}\ll st\ll \frac{R}{q}}} \frac{(\sqrt{R})^k}{(qst)^{k+\frac{1}{2}}}
     \ll \sumd_{R\ll q^\delta}\frac{q^\epsilon}{q}R^{\frac{1}{4}}\ll  q^\epsilon \frac{q^{\frac{\delta}{4} }}{q}.    
\end{align*}
Hence, for $\delta<4$ 
\begin{align*}
    S_{1,\mathrm{case_2}} \ll \frac{\log q}{q}+\frac{q^{\frac{\delta}{4}}q^\epsilon }{q}\ll \frac{1}{\log q}.
\end{align*}

\msubsection{Proof of Equation (\ref{Equation 6.4}) — Bound for}{S_{1,\mathrm{case}_3}}

Explicitly, we have the following.
\begin{align*}
   S_{1,\mathrm{case}_3}
   := \frac{1}{\log q}\,\mathcal{K}\!
   \sumd_{R \ll q^\delta}\;\sum_{\substack{s,t \\ \frac{\sqrt R}{q}\gg st}}
   \frac{2\pi \mu(t)}{qst \phi(t)} V_{qs}(0,1;t)
   \sum_{\substack{r \\ (r,t)=1}} a_r \, W\!\left(\tfrac{r}{R}\right)
   e\!\left(\tfrac{r+1}{qst}\right)
   J_{k-1}\!\left(\tfrac{4\pi \sqrt{r}}{qst}\right).
\end{align*}
Using Equation (\ref{J-Bessel expansion 1}), we obtain
\begin{equation*}
    J_{k-1}\left(\frac{4\pi \sqrt r}{qst}\right)=\frac{1}{2\sqrt 2 \pi} \left(\frac{qst}{\sqrt r}\right)^{\frac{1}{2}} \Re\left [w_k\left(\frac{4\pi \sqrt r}{qst}\right)e\left(\frac{2\sqrt r}{qst}-\frac{k}{4}+\frac{1}{8}\right)\right].
\end{equation*}
Similar to the previous case $S_{1,\mathrm{case}_2}$, we define
\begin{equation*}
    \eta(y):=\hat{\varphi}\left(\frac{\log yR}{\log q}\right) W(y) y^{-\frac{1}{4}} w_{k,1}\left(\frac{4\pi \sqrt{yR}}{qst}\right),
\end{equation*}
where $w_{k,1}$ can be $w_k$ or $\bar{w}_k$.
By Lemma \ref{Lemma 6.3} with $\alpha=2$, we have
\begin{equation*}
    \eta\left(\frac{r}{R}\right) e\left(\frac{r}{R} \frac{R}{qst}\right) e\left( \frac{2\sqrt R}{qst}\sqrt{\frac{r}{R}}\right)
    = \frac{1}{2\pi i} \int_{(1)} \mathcal{M}_{2,l}(z) \left(\frac{R}{r}\right)^z \, dz,
\end{equation*}
where
\begin{equation*}
    \mathcal{M}_{2,l}(y) = \eta(y)\, e\left(y \frac{R}{qst}\right) e\left( 2\sqrt{y} \frac{\sqrt R}{qst}\right).
\end{equation*}

\begin{align*}
     S_{1,\mathrm{case}_3} &\ll \frac{1}{\log q} \sumd_{R\ll q^\delta } \sum_s \sum_{\substack{t \\ st \ll \frac{\sqrt R}{q} }}\Biggl| 
     \frac{2\pi \mu(t)}{qst \phi(t)} V_{qs}(0,1;t) e\left(\frac{1}{qst}\right) 
     \left(\frac{qst}{\sqrt R}\right)^{\frac{1}{2}}
     \frac{1}{2\pi i} \int_{(1)} \sum_r \frac{\Lambda(r)}{r^{\frac{1}{2}+z}} \mathcal{M}_{2,l}(z) R^z \, dz \Biggr|.
\end{align*}
Similar to Equation (\ref{Equation 5.5z}), the integral over $z$ becomes
\begin{equation*}
    \frac{1}{2\pi i} \int_{(1)} -\frac{L'}{L}\left(\frac{1}{2}+z,\chi_0\right) \mathcal{M}_{2,l}(z) R^z \, dz
    = \mathcal{M}_{2,l}\left(\frac{1}{2}\right) R^{\frac{1}{2}} - \frac{1}{2\pi i} \int_{\left(\frac{1}{\log q}\right)} \frac{L'}{L}\left(\frac{1}{2}+z, \chi_0\right) \mathcal{M}_{2,l}(z) R^z \, dz.
\end{equation*}
Let the contribution from the pole be $S_{1,\mathrm{case}_3, \mathrm{Prin}}$ and the contribution from the integral be $S_{1,\mathrm{case}_3, \mathrm{int}}$. Then
\begin{align*}
    S_{1,\mathrm{case}_3, \mathrm{Prin}} &\ll \frac{1}{\log q} \sumd_{R\ll q^\delta} \sum_s \sum_{\substack{t \\ st \ll \frac{\sqrt{R}}{q} }} 
    \frac{1}{qst} \left(\frac{qst}{\sqrt R}\right)^{\frac{1}{2}} \left(\frac{qst}{R}\right) R^{\frac{1}{2}} \\ & \ll \frac{1}{\log q}  \sumd_{R\ll q^\delta} \sum_s \sum_{\substack{t \\ st \ll \frac{\sqrt{R}}{q} }} \frac{(qst)^{\frac{1}{2}}}{R^{\frac{3}{4}}} \ll \frac{\log q}{q}.
\end{align*}
By a similar calculation to Equation (\ref{equation 6.7}), we get
\begin{align*}
    \int_{(\frac{1}{\log q})} \frac{L'}{L}\left(\frac{1}{2}+z,\chi_0\right) \mathcal{M}_{2,l}(z) R^z \, dz &\ll \sqrt{\frac{R}{qst}} \log^2 q.
\end{align*}
Therefore,
\begin{align*}
    S_{1,\mathrm{case}_3, \mathrm{int}} &\ll \frac{1}{\log q} \sumd_{R\ll q^\delta} \sum_s \sum_{\substack{t \\ st \ll \frac{\sqrt R}{q} }} 
    \frac{1}{qst} \left(\frac{qst}{\sqrt R}\right)^{\frac{1}{2}} \sqrt{\frac{R}{qst}} \log^2 q   \ll \frac{q^\epsilon}{q} q^{\frac{\delta}{4}}.
\end{align*}
Since $\delta < 4$, we have
\begin{equation*}
    S_{1,\mathrm{case}_3} \ll \frac{q^\epsilon}{q} q^{\frac{\delta}{4}} + \frac{\log q}{q}\ll \frac{1}{\log q}.
\end{equation*}

\section{PROOF OF LEMMA \ref{Lemma 4.3} -- CONTRIBUTION FROM THE NON-TRIVIAL CHARACTERS}\label{Section 7}

Now, we will bound the remaining contribution from the non-trivial characters to complete the proof of Theorem \ref{Main Thm}.  
Recall that 
\begin{align*}
    S_2 \;=\; \frac{1}{\log q}\mathcal{K} 
    \sum_{s,t} \frac{2\pi}{qst\phi(t)}
    \sumstar_{m(t)} V_{qs}(m,1;t)
    \sum_{\substack{\chi(t) \\ \chi\neq\chi_0 }} \bar{\chi}(m)
    \sum_p \chi(p) a_p \,
    e\!\left(\frac{p+1}{qst}\right)
    J_{k-1}\!\left(\frac{4\pi \sqrt{p}}{qst}\right).
\end{align*}
We will consider sums over $p$, $s$, and $t$ in dyadic intervals of the form $P \leq p \leq 2P$, $T \leq t \leq 2T$, and $S \leq s \leq 2S$, denoted by $p\sim P$, $t \sim T$, and $s \sim S$, respectively.  
For nice smooth functions $U, X, Y \in C_c^\infty([\frac{3}{4},2])$, similar to equation (\ref{4.111}), we define
\begin{align}
    D(P,S,T) \hspace{-.1 cm}:&= \hspace{-.2 cm}
    \sum_{s} \sum_{t} \frac{1}{qst\phi(t)} 
    X\!\left(\frac{s}{S}\right) 
    Y\!\left(\frac{t}{T}\right)
    \hspace{-.1 cm}\sumstar_{m(t)} V_{qs}(m,1;t)
    \hspace{-.1 cm}\sum_{\substack{\chi(t) \\ \chi\neq\chi_0 }} \hspace{-.1 cm}\bar{\chi}(m)
    \hspace{-.1 cm}\sum_{p} \hspace{-.06 cm}\chi(p)a_p \, \nonumber \\ & \times 
    U\!\left(\frac{p}{P}\right)
    e\!\left(\frac{p+1}{qst}\right)
    J_{k-1}\!\left(\frac{4\pi \sqrt{p}}{qst}\right).
\end{align}
Next, we decompose $D(P,S,T)$ into three ranges depending on the transition behavior of the $J$-Bessel function and the exponential term:
\begin{equation}
   D(P,S,T) \;:=\; D(P,S,T)_{\mathrm{case}_1} \;+\; D(P,S,T)_{\mathrm{case}_2} \;+\; D(P,S,T)_{\mathrm{case}_3},
\end{equation}
where $ D(P,S,T)_{\mathrm{case}_1}$  is restricted to region $\frac{P}{q}\ll ST$, $D(P,S,T)_{\mathrm{case}_2}$ is restricted to region $\frac{\sqrt P}{q}\ll ST\ll \frac{P}{q}$ and lastly $D(P,S,T)_{\mathrm{case}_3}$ is restricted to $\frac{\sqrt P}{q}\gg ST$.  
Again we can assume $P \ll q^{\delta}$ in all three cases due to $\operatorname{supp} \hat{\varphi} \subset (-\delta,\delta)$.\\
\begin{lemma}
   Assume GRH. With the above notation, for $k\geq 3$ odd, the following estimates hold for any $\epsilon>0$: \vspace{-.5 cm}
   \begin{align}
      D(P,S,T)_{\mathrm{case}_1} &\ll q^\epsilon \frac{\sqrt P}{q^2S}\left(\frac{\sqrt P}{qST}\right)^{k-2}, 
         &&  
         \label{equation 7.6}\\[0.5em]
      D(P,S,T)_{\mathrm{case}_2} &\ll \frac{q^\epsilon\  P^{\frac{3}{4}} }{q^2S}, 
         && 
         \label{equation 7.7}\\[0.5em]
      D(P,S,T)_{\mathrm{case}_3} &\ll  \frac{q^\epsilon\  P^{\frac{3}{4}}}{q^2S}. 
         && 
         \label{equation 7.8}
   \end{align}
\end{lemma}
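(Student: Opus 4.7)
The strategy is to mimic the three-case analysis of Section \ref{Section 6} for the trivial character, inserting a Cauchy--Schwarz step to handle the sum over non-trivial $\chi$. The two new ingredients compared to the proof of Lemma \ref{lemma 4.2 a} are (i) a Cauchy--Schwarz step in the $m$ variable that exploits orthogonality of Dirichlet characters, and (ii) a cleaner contour shift, since $L(s,\chi)$ has no pole at $s=1$ for $\chi\neq\chi_0$.

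First, I would interchange the $m$ and $\chi$ summations in $D(P,S,T)$, writing the inner sum as $\sum_{\chi\neq\chi_0} I_\chi(s,t)\,\tau_\chi(s,t)$, where $I_\chi(s,t)$ is the prime sum against $\chi$ and $\tau_\chi(s,t):=\sumstar_{m\bmod t} V_{qs}(m,1;t)\bar\chi(m)$. Applying Cauchy--Schwarz in $m$, together with character orthogonality and the Parseval-type identity
\begin{equation*}
\sum_{m\bmod t}|V_{qs}(m,1;t)|^2\;\le\;t\,\phi(t)
\end{equation*}
(obtained by expanding the square and extracting the diagonal in the inner $x$-sums), gives
\begin{equation*}
\Bigl|\sum_{\chi\neq\chi_0} I_\chi\,\tau_\chi\Bigr|\;\ll\;\sqrt t\,\phi(t)\Bigl(\sum_{\chi\neq\chi_0}|I_\chi|^2\Bigr)^{1/2}.
\end{equation*}
The contribution of a given $(s,t)$ to $D(P,S,T)$ is therefore $\ll (qs\sqrt t)^{-1}\bigl(\sum_\chi |I_\chi|^2\bigr)^{1/2}$, reducing the problem to bounding $\sum_{\chi\neq\chi_0}|I_\chi(s,t)|^2$ uniformly in $s,t$.

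To estimate $I_\chi$ I would apply the Mellin--contour machinery of Section \ref{Section 6} essentially verbatim. For Cases 1 and 2, substitute the power-series expansion of $J_{k-1}$ from Equation (\ref{J-Bessel expansion 2}) and invoke Lemma \ref{Lemma 6.3} with $\alpha=0$ to separate the $p$ variable via Mellin. For Case 3, use the asymptotic Equation (\ref{J-Bessel expansion 1}) with $\alpha=2$, so that the oscillation $e(2\sqrt r/qst)$ is absorbed into the Mellin step. In each case, extend the prime sum to all integers coprime to $t$ (by the obvious analogue of Lemma \ref{Lemma E_{off}}, which is in fact easier for $\chi\neq\chi_0$), so that the resulting Dirichlet series is $-L'/L(\tfrac12+z,\chi)$. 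Since $\chi\neq\chi_0$, there is no pole at $z=\tfrac12$; under GRH I shift the contour directly to $\Re z=1/\log q$. Lemma \ref{Lemma 2.4} then gives $L'/L(\tfrac12+z,\chi)\ll\log^2(q+|\Im z|)$ on this line, and the saddle-point bound of Lemma \ref{Lemma 6.3} yields $\int|\mathcal M|\,|dz|\ll\sqrt{P/(qst)}$. Collecting factors produces $|I_\chi|\ll q^\epsilon P^{k/2}/(qst)^{k-1/2}$ in Cases 1 and 2, and $|I_\chi|\ll q^\epsilon P^{1/4}$ in Case 3, where the extra $(qst/\sqrt P)^{1/2}$ from the asymptotic $J$-Bessel cancels against the Mellin norm.

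After squaring, summing over $\chi\bmod t$ (factor $\phi(t)\ll T$), and summing over $s\sim S$, $t\sim T$ via $\sum_{s,t}(qs\sqrt t)^{-1}\ll\sqrt T/q$, substitution of the range of $ST$ defining each case produces the claimed bounds (\ref{equation 7.6})--(\ref{equation 7.8}). The main obstacle is the Cauchy--Schwarz step: extracting $\sqrt t$ rather than the trivial $t$ from $V_{qs}$ via the Parseval identity is precisely what produces the crucial $S^{-1}$ savings in the final bounds; without this one would lose exactly a factor $\sqrt P$ and fail to reach $\delta=8/3$. A secondary technical point is that the smooth weight built from $U$, $e(yP/qst)$, $J_{k-1}(4\pi\sqrt{yP}/qst)$, and $\hat\varphi$ has bounded derivatives only in the appropriate regime, so one must check case by case that Lemma \ref{Lemma 6.3} applies; but this is directly parallel to the verification already carried out in Section \ref{Section 6}.
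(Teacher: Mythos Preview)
Your proposal is essentially correct and follows the paper's strategy: Cauchy--Schwarz to reduce to $(\sum_{\chi\neq\chi_0}|I_\chi|^2)^{1/2}$, then a GRH bound on each $I_\chi$ after separating the oscillation via Mellin. Two cosmetic differences are worth noting. First, the paper presents the Cauchy--Schwarz step by opening $V_{qs}(m,1;t)$ as a sum over $x\bmod t$ and applying Cauchy--Schwarz in $x$ (yielding $W_1,W_2$); your version applies it in $\chi$ and invokes the Parseval bound $\sum_m|V_{qs}(m,1;t)|^2\le t\phi(t)$. Both routes give the same factor $\sqrt t\,\phi(t)$. Second, the paper does not extend the prime sum to all integers in Section~\ref{Section 7}; it keeps $\sum_p$ and invokes Lemma~\ref{Lemma 2.4} after the Mellin step, which is equivalent to (and indeed is how one proves) your contour shift for $-L'/L(\tfrac12+z,\chi)$. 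The $\log^2$ bound you quote is Equation~(\ref{eqn 6.6}), not Lemma~\ref{Lemma 2.4}.

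One small inaccuracy: in Case~1 you invoke Lemma~\ref{Lemma 6.3} and claim the saving $\int|\mathcal M|\ll\sqrt{P/(qst)}$, but there $P/(qst)\ll 1$, so the phase has no stationary point and Lemma~\ref{Lemma 6.3} (which requires $X$ large) does not apply as stated; one only gets $\int|\mathcal M|\ll 1$. The paper accordingly skips Mellin in Case~1 and applies Lemma~\ref{Lemma 2.4} directly, using that $e(yP/(qst))$ has bounded derivatives. With this correction your $|I_\chi|$ bound in Case~1 becomes $q^\epsilon(\sqrt P/(qst))^{k-1}$ rather than $q^\epsilon P^{k/2}/(qst)^{k-1/2}$, and this is precisely what produces (\ref{equation 7.6}).
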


\subsection{Proof of Lemma \ref{Lemma 4.3}}
Note that if $k\geq 3$ odd and $P\ll q^\delta$, we get
\begin{align*}
    S_2&=\sumd_{P}  \sumd_{S} \sumd_{T} \Bigg[D(P,S,T)_{\mathrm{case}_1} \;+\; D(P,S,T)_{\mathrm{case}_2} \;+\; D(P,S,T)_{\mathrm{case}_3}\Bigg]
    \\&\ll\sumd_{P\ll  q^\delta} \sumd_{\substack{S, T \\ \frac{P}{q}\ll ST }}q^\epsilon \frac{\sqrt P}{q^2S}\left(\frac{\sqrt P}{qST}\right)^{k-2}+\sumd_{P\ll  q^\delta} \sumd_{\substack{S, T \\ \frac{\sqrt P}{q}\ll ST\ll \frac{P}{q} }}\frac{q^\epsilon\  P^{\frac{3}{4}}}{q^2S}+\sumd_{P\ll q^\delta} \sumd_{\substack{S, T \\ ST\ll \frac{\sqrt{P}}{q} }} \frac{q^\epsilon\  P^{\frac{3}{4}}}{q^2S}\\&
    \ll\frac{q^\epsilon q^{\frac{\delta}{2}}}{q^2}+\frac{q^\epsilon\  q^{\frac{3\delta}{4}}}{q^2}+\frac{q^\epsilon\  q^{\frac{3\delta}{4}}}{q^2}.
\end{align*}
Therefore, for $\delta<\frac{8}{3}$, we have $S_2\ll \frac{1}{\log q}$, as desired.

\msubsection{Proof of Equation (\ref{equation 7.6}) — Bound for}{D(P,S,T)_{\mathrm{case}_1}}
For case 1, we have $ \frac{P}{q}\ll ST$, and explicitly, we write
\begin{align*}
    D(P,S,T)_{\mathrm{case}_1} &:= \frac{1}{\log q}\mathcal{K} 
   \sum_s \sum_t\frac{2\pi}{qst\phi(t)}
    X\!\left(\frac{s}{S}\right) 
    Y\!\left(\frac{t}{T}\right)
    \sumstar_{m(t)} V_{qs}(m,1;t)
    \sum_{\substack{\chi(t) \\ \chi\neq\chi_0 }} \bar{\chi}(m)
    \sum_{p} \chi(p)a_p\\& \times \,
    U\!\left(\frac{p}{P}\right)
    e\!\left(\frac{p+1}{qst}\right)
    J_{k-1}\!\left(\frac{4\pi \sqrt{p}}{qst}\right).\nonumber \hspace{1 cm}
\end{align*} 
Using the series expansion of the $J$-Bessel, in Equation (\ref{J-Bessel expansion 2}) of Lemma \ref{Lemma J-Bessel Function} we get
\begin{align}
      D(P,S,T)_{\mathrm{case}_1}
      &\ll \frac{1}{\log q}
      \sum_{s}\sum_{t}
      \Bigg|\frac{1}{qst\phi(t)}X\!\left(\frac{s}{S}\right) 
      Y\!\left(\frac{t}{T}\right)
      \sumstar_{m(t)}V_{qs}(m,1,t)
      \sum_{\substack{\chi(t) \\ \chi\neq \chi_0 }}\bar{\chi}(m)
      \sum_{p} \chi(p)a_p 
      U\!\left(\frac{p}{P}\right) \nonumber\\
      &\hspace{2.4 cm} \times e\!\left(\frac{p+1}{qst}\right) 
      \sum_{l=0}^{\infty} 
      \frac{\left(\tfrac{2\pi \sqrt p}{qst}\right)^{2l+k-1}}{l!\,(l+k-1)!}
      \Bigg| \nonumber\\
      &\ll \frac{1}{\log q}\frac{1}{qS}
      \sum_{l=0}^{\infty} 
      \frac{1}{l!\,(l+k-1)!}
      \left(\frac{2\pi\sqrt P}{qST}\right)^{2l+k-1} 
      \sum_{s}\sum_{t}\Bigg|
      \frac{1}{t\phi(t)}X\!\left(\frac{s}{S}\right) 
      Y\!\left(\frac{t}{T}\right)\Bigg|(W_1W_2)^{1/2}\label{Equation 6.7x},
\end{align}
where for $i=1,2$,
\begin{align*}
    W_{i}&=\sum_{\substack{x(t) \\ (x(x+qs),t)=1 }}|T_i(x)|^2, \\
    T_1(x;l)& = \sumstar _{m(t)}e\!\left(\frac{m\bar{x}}{t}\right)S_{1,l}(m), \\
    S_{1,l}(m)&=\sum_{\substack{\chi(t) \\ \chi\neq \chi_0 }}\bar{\chi}(m)
    \sum_{p} a_p U\!\left(\frac{p}{P}\right)\chi(p) 
    e\!\left(\frac{p}{qst}\right)\left(\sqrt{\frac{p}{P}}\right)^{2l+k-1}, \\
    T_2(x)&=e\!\left(-\frac{\overline{x+qs}}{t}\right)e\left(\frac{1}{qst}\right).
\end{align*}
We have
\begin{equation}\label{Eqn 6.7x}
    W_{2} =\sum_{\substack{x(t) \\ (x(x+qs),t)=1 }}
    |T_2(x;l)|^2 
     \leq \phi(t).
\end{equation}
Moreover, 
\begin{align*}
    W_{1}
    &\leq \sum_{x(t)}
    \left|\sumstar_{m(t)}e\!\left(\frac{mx}{t}\right)S_{1,l}(m)\right|^2\\ &=t\sumstar_{m(t)}
    \left|\sum_{\substack{\chi(t) \\ \chi\neq \chi_0 }}\bar{\chi}(m)
    \sum_{p}  \chi(p)U\!\left(\frac{p}{P}\right) 
    e\!\left(\frac{p}{qst}\right)\left(\sqrt {\frac{p}{P}}\right)^{2l+k-1}
    \frac{\log p}{\sqrt p}\,
    \hat{\varphi}\!\left(\frac{\log p}{\log q}\right)\right|^2.
\end{align*}
Using the orthogonality of the Dirichlet characters, we get
\begin{align*}
    W_{1} 
    &\leq  t\phi(t)
    \sum_{\substack{\chi(t) \\ \chi\neq \chi_0 }}
    \left|\sum_{p}  
    \frac{\chi(p)}{\sqrt p}\log p \,
    e\!\left(\frac{p}{qst}\right)\left(\sqrt {\frac{p}{P}}\right)^{2l+k-1}
    U\!\left(\frac{p}{P}\right)
    \hat{\varphi}\!\left(\frac{\log p}{\log q}\right)\right|^2 \\
    &= t\phi(t)
    \sum_{\substack{\chi(t) \\ \chi\neq \chi_0 }}
    \left|\sum_{p}\frac{\chi(p)\,\log p}{\sqrt p} \,
    e\left(\frac{p}{P}\cdot \frac{P}{qst}\right)
    U\!\left(\frac{p}{P}\right)
    \left(\sqrt {\frac{ p}{ P}}\right)^{2l+k-1}
    \hat{\varphi}\!\left(\frac{\log(p/P)+\log P}{\log q}\right)\right|^2.
\end{align*}
Let
\[
    \psi(y)=e\!\left(y\cdot \tfrac{P}{qst}\right)
    U(y)(\sqrt{y})^{2l+k-1}
    \hat{\varphi}\!\left(\frac{\log y+\log P}{\log q}\right).
\]
Using condition $\frac{P}{q}\ll ST$, we obtain $|\psi^3(x)|\ll (1+l)^3$.
Then, using Lemma \ref{Lemma 2.4}, for non-trivial characters, we get
\[
    \sum_{p}\frac{\chi(p)\log p}{\sqrt p}\psi\!\left(\frac{p}{P}\right)
    \ll\log^2 q\,(1+l)^3.
\]
Therefore, for $k\geq 3$ odd, we get
\begin{align*}
    D(P,S,T)_{\mathrm{case}_1}
    &\ll \frac{q^\epsilon}{qS}\sum_{l=0}^{\infty}
    \frac{(l+1)^3}{l!\,(l+k-1)!}
    \left(\frac{2\pi\sqrt P}{qST}\right)^{2l+k-1}
    \sum_{s}\sum_{t}
    \frac{1}{t\phi(t)}\,t\phi(t)\,
    \Bigg|X\!\left(\frac{s}{S}\right) 
    Y\!\left(\frac{t}{T}\right)\Bigg|  \\
    &\ll \frac{q^\epsilon}{qS}\sum_{l=0}^{\infty}
    \frac{(l+1)^3}{l!\,(l+k-1)!}\,
    \left(\frac{2\pi\sqrt P}{qST}\right)^{2l+k-1} 
    ST \\&\ll q^\epsilon \frac{\sqrt P}{q^2S}\left(\frac{\sqrt P}{qST}\right)^{k-2}.
\end{align*}
\msubsection{Proof of Equation (\ref{equation 7.7}) — Bound for}{D(P,S,T)_{\mathrm{case}_2}}

For case 2, we have $\frac{\sqrt P}{q}\ll ST\ll \frac{P}{q}$. Using the series expansion of the $J$-Bessel function from Equation (\ref{J-Bessel expansion 2}) of Lemma \ref{Lemma J-Bessel Function}, and following similar calculations in $D(P,S,T)_{\mathrm{case}_1}$ under the same notation as in Equation \ref{Equation 6.7x}, we obtain
\begin{equation*}
     D(P,S,T)_{\mathrm{case}_2}\ll \frac{1}{\log q}\ \frac{1}{qS}
      \sum_{l=0}^{\infty} 
      \frac{1}{l!\,(l+k-1)!}
      \left(\frac{2\pi\sqrt P}{qST}\right)^{2l+k-1} 
      \sum_{s}\sum_{t}\Bigg|
      \frac{1}{t\phi(t)}X\!\left(\frac{s}{S}\right) 
      Y\!\left(\frac{t}{T}\right)(W_1W_2)^{1/2}
      \Bigg|,
\end{equation*}
where $W_{2}\leq \phi(t)$. Suppose $U_1\in C_c^\infty([\frac{1}{2},\frac{5}{2}])$, and $U_1(x)=1$ on $[1,2]$.
\[
 W_{1}\ll t\phi(t)\ \sum_{\substack{\chi(t) \\ \chi\neq\chi_0}}
 \left|\sum_{p}\frac{\chi(p)\log p}{\sqrt p}\, e\!\left(\frac{p}{P}\cdot \frac{P}{qst}\right) 
 U\!\left(\frac{p}{P}\right)U_1\left(\frac{p}{P}\right)\,
 \left( \sqrt{\frac{ p}{ P}}\right)^{2l+k-1}
 \hat{\varphi}\!\left(\frac{\log \tfrac{p}{P}+\log P}{\log q}\right)\right|^2.
\]
Define
\begin{equation*}
    \psi(y)=U(y)\,(\sqrt{y})^{2l+k-1}\hat{\varphi}\!\left(\frac{\log y+\log P}{\log q}\right)\frac{1}{y}.
\end{equation*}
From Lemma \ref{Lemma 6.3}, we have
\begin{equation*}
    U_1(y)\, e\!\left(y\cdot \frac{P}{qst}\right)=\frac{1}{2\pi i}\int_{(1)}\mathcal{M}_2(s)y^{-s}\,ds,
\end{equation*}
where
\begin{equation*}
\mathcal{M}_2(1+iv)=\int_{0}^{\infty}U_1(y)\, e\!\left(\frac{yP}{qst}\right)y^{iv}\,dy.
\end{equation*}
Using Lemma \ref{Lemma 2.4} and the bound of $\mathcal{M}_1(1+iv)$ in Lemma \ref{Lemma 6.3}, we obtain the following.
\begin{align*}
W_{1}&\ll t\phi(t)\ \sum_{\substack{\chi(t) \\ \chi\neq\chi_0}}
        \left|\sum_{p}\frac{\chi(p)\log p}{\sqrt p}\psi\left(\frac{p}{P}\right)\int_{(1)}\mathcal{M}_2(z)\left(\frac{P}{p}\right)^{z-1}\,dz\right|^2 \\
    &\ll t\phi(t)\ \sum_{\substack{\chi(t) \\ \chi\neq\chi_0}}
    \Bigg[\int_{\mathbb{R}}\!|\mathcal{M}_2(1+iw)|\,
    \Bigg|\sum_{p}\frac{\log p}{p^{\frac{1}{2}+iw}}\chi(p)\psi\left(\frac{p}{P}\right)\, \Bigg|\, dw\Bigg]^2 \\
&\ll t^2\phi(t)\Bigg[\int_{\mathbb{R}}|\mathcal{M}_2(1+iw)| \,\log^2(q+|w|)(1+l)^3\,dw\Bigg]^2 \ll t^2\phi(t)\ q^\epsilon \left(\sqrt{\frac{P}{qST}} (1+l)^3 \right)^2.
\end{align*}
Since $\frac{\sqrt P}{q}\ll ST\ll \frac{P}{q}$,
\begin{align*}
D(P,S,T)_{\mathrm{case}_2}& \ll \frac{q^\epsilon}{qS}\sum_{l=0}^{\infty}\frac{(l+1)^3}{l!(l+k-1)!} \left(\frac{2\pi\sqrt P}{qST}\right)^{2l+k-1}\hspace{-.3 cm}\sqrt{\frac{P}{qST}}\sum_{s}\sum_{t}\frac{1}{t\phi(t)}\, t\phi(t)\, \Bigg|X\!\left(\frac{s}{S}\right)Y\!\left(\frac{t}{T}\right) \Bigg| \\
     & \ll \frac{q^\epsilon}{qS} \left(\frac{\sqrt P}{qST}\right)^{k-1} ST \sqrt{\frac{P}{qST}}\ll \frac{q^\epsilon}{q^2S} \left(\frac{\sqrt P}{qST}\right)^{k-\frac{3}{2}} P^{\frac{3}{4}}\ll \frac{q^\epsilon}{q^2S} P^{\frac{3}{4}},
\end{align*}
 as desired.

\msubsection{Proof of Equation (\ref{equation 7.8}) — Bound for}{D(P,S,T)_{\mathrm{case}_3}}

For case 3, we have $\frac{\sqrt P}{q}\gg ST$. Explicitly, we write
\begin{align*}
    D(P,S,T)_{\mathrm{case}_3} &:= \frac{1}{\log q}\mathcal{K} 
    \sum_{s}\sum_{t}\frac{2\pi}{qst\phi(t)}
    X\!\left(\frac{s}{S}\right) 
    Y\!\left(\frac{t}{T}\right)
    \sumstar_{m(t)} V_{qs}(m,1;t)
    \sum_{\substack{\chi(t) \\ \chi\neq\chi_0 }} \bar{\chi}(m)
    \sum_{p} \chi(p)a_p \,\\& \times
    U\!\left(\frac{p}{P}\right)
    e\!\left(\frac{p+1}{qst}\right)
    J_{k-1}\!\left(\frac{4\pi \sqrt{p}}{qst}\right).\nonumber 
\end{align*}  
By Equation (\ref{J-Bessel expansion 1}) of Lemma \ref{Lemma J-Bessel Function} we write
\begin{equation*}
    J_{k-1}\left(\frac{4\pi \sqrt r}{qst}\right)
    =\frac{1}{2\sqrt{2}\pi} \left(\frac{qst}{\sqrt r}\right)^{\frac{1}{2}}
    \Re\left[w_k\!\left(\frac{4\pi \sqrt r}{qst}\right)
    e\!\left(\frac{2\sqrt r}{qst}-\frac{k}{4}+\frac{1}{8}\right)\right].
\end{equation*}
Again, following the proof of $D(P,S,T)_{\mathrm{case}_1}$ with $w_{k,1}$ as $w_k$ or $\bar{w}_k$, we obtain
\begin{align*}
       D(P,S,T)_{\mathrm{case}_3}& \ll \sum_{s}\sum_{t}\Bigg|\frac{1}{qst\phi(t)}X\!\left(\frac{s}{S}\right) 
    Y\!\left(\frac{t}{T}\right)\sumstar_{m(t)}V_{qs}(m,1;t)\sum_{\substack{\chi(t) \\ \chi\neq\chi_0 }}\bar{\chi}(m)\sum_p\chi(p)a_p U\!\left(\frac{p}{P}\right)e\!\left(\frac{p+1}{qst}\right)  \\
    & \hspace{1.4 cm}\times \left(\frac{qst}{\sqrt P}\right)^{\frac{1}{2}}\frac{P^{\frac{1}{4}}}{p^{\frac{1}{4}}} w_{k,1}\!\left(\frac{4\pi\sqrt p}{qst}\right)e\!\left(\frac{2\sqrt p}{qst}\right)\Bigg|\\
    &\ll \frac{1}{qS}\frac{(qST)^{\frac{1}{2}}}{P^{\frac{1}{4}}}\sum_{s}\sum_{t}\frac{1}{t\phi(t)}\Bigg|X\!\left(\frac{s}{S}\right) 
    Y\!\left(\frac{t}{T}\right)\Bigg|(W_1W_2)^{\frac{1}{2}},
\end{align*}

where
\begin{align*}
    W_{1}&=\sum_{\substack{x(t) \\ (x(x+qs),t)=1 }}|T_1(x)|^2,\\
    T_1(x;l)& = \sumstar _{m(t)}e\!\left(\frac{m\bar{x}}{t}\right)S_{1,l}(m),\\
    S_{1,l}(m)&=\sum_{\substack{\chi(t) \\ \chi\neq\chi_0 }}\bar{\chi}(m)\sum_p a_p U\!\left(\frac{p}{P}\right)\chi(p) e\!\left(\frac{p}{qst}\right)w_{k,1}\!\left(\frac{4\pi\sqrt p}{qst}\right) e\!\left(\frac{2\sqrt p}{qst}\right)\left(\frac{P}{p}\right)^{\frac{1}{4}}.
\end{align*}
$W_2$ is defined as in Equation (\ref{Eqn 6.7x}). Following the same arguments as in $D(P,S,T)_{\mathrm{case}_2}$, we add the factor $U_1(\frac{p}{P})$ and obtain 

\begin{align*}
    W_{1} & \ll t\phi(t)\ \sum_{\substack{\chi(t) \\ \chi\neq\chi_0 }}\Bigg|\sum_{p}\frac{\chi(p)\log p}{\sqrt p}\, e\!\left(\frac{p}{P}\cdot \frac{P}{qst}\right)\, U\!\left(\frac{p}{P}\right)\,  U_1\!\left(\frac{p}{P}\right)\hat{\varphi}\!\left(\frac{\log \frac{p}{P}+\log P}{\log q}\right)\frac{P^{\frac{1}{4}}}{p^{\frac{1}{4}}} \\
    &\hspace{1.6 cm} \quad \times W_{k,1}\!\left(\frac{4\pi \sqrt p}{qst}\right)e\!\left(\frac{2\sqrt p}{qst}\right)\Bigg|^2.
\end{align*}
Let
\[
\psi(y)=\frac{1}{y}U(y)\, \frac{1}{y^{\frac{1}{4}}}\, \hat{\varphi}\!\left(\frac{\log y+\log P}{\log q}\right)\, W_{k,1}\!\left(\frac{4\pi \sqrt{yP}}{qst}\right).
\]
By Lemma \ref{Lemma 6.3}, with $\alpha =2$ we get
\begin{equation*}
    U_1(y)\, e\!\left(y\cdot \frac{P}{qst}\right)e\!\left( \frac{2\sqrt{yP}}{qst}\right)=\frac{1}{2\pi i}\int_{(1)}\mathcal{M}_2(s)y^{-s}\, ds.
\end{equation*}
By Lemma \ref{Lemma 2.4} and Lemma \ref{Lemma 6.3}, for $\tfrac{\sqrt{P}}{q}\gg ST$, we get
\begin{align*}
    W_{1}&\ll t\phi(t)\ \sum_{\substack{\chi(t) \\ \chi\neq\chi_0 }}\left|\sum_{p}\frac{\chi(p)\log p}{\sqrt p}\, \psi\!\left(\frac{p}{P}\right)\int_{(1)}\mathcal{M}_2(z)\left(\frac{P}{p}\right)^{z-1}\, dz\right|^2\\
    &\ll t^2\phi(t)\Bigg[\int_{\mathbb{R}}|\mathcal{M}_2(1+iw)|\, \log^2(q+|w|) \ dw\Bigg]^2\ll t^2\phi(t)q^{\epsilon}\left(\sqrt{\frac{P}{qST}}\right)^2.
\end{align*}
Hence, using $\tfrac{\sqrt{P}}{q}\gg ST$, we get
\begin{align*}
 D(P,S,T)_{\mathrm{case}_3}&\ll  q^{\epsilon}\frac{(qST)^{\frac{1}{2}}}{P^{\frac{1}{4}}}\cdot \frac{1}{qS}\sum_{s}\sum_{t}\frac{1}{t\phi(t)}\, t\phi(t) \Bigg|X\!\left(\frac{s}{S}\right) 
    Y\!\left(\frac{t}{T}\right)\Bigg|\sqrt{\frac{P}{qST}}\\
    &\ll  \frac{q^{\epsilon}P^{\frac{1}{4}}}{qS}\, ST \ll \frac{q^\epsilon\  P^{\frac{3}{4}}}{q^2S},
\end{align*}
as desired.\\

\section*{FUNDING} This work was supported by the National Science Foundation [grant number DMS-2502599].

\section*{ACKNOWLEDGMENTS} The author is very grateful to Professor Vorrapan Chandee for her
guidance throughout the making of this paper.
\newpage

 \bigskip

\noindent Mathematics Department, Kansas State University, Manhattan, KS 66503\\
\noindent Email address: \texttt{arijitpaul@ksu.edu} 

\end{document}